\newtheorem{theorem}{Theorem}[section]
\newtheorem{prop}[theorem]{Proposition}
\newtheorem{co}[theorem]{Corollary}
\newtheorem{lm}[theorem]{Lemma}
\newtheorem{rem}[theorem]{Remark}
\newtheorem{definition}[theorem]{Definition}
\def\Proof{\noindent{\sl Proof.}\qquad}
\def\rank{\mathop{\rm rank}}
\newcommand{\trans}{^T}
\def\kmms{\kern-\mathsurround}
\newcommand{\calB}{{\cal B}}
\newcommand{\calA}{{\cal A}}
\newcommand{\bbR}{{\mathbb R}}
\newcommand{\bbZ}{{\mathbb Z}}
\newcommand{\period}{{\rho}}
\renewcommand{\mod}{\mathop{\rm mod}}
\newcommand{\im}{\mathop {\rm im}}
\begin{document}

\title{Periodicity of non-central integral arrangements 
modulo positive integers}
\author{
Hidehiko Kamiya
\footnote
{
{\rm This work was partially supported by 
JSPS 
KAKENHI (19540131).}\,\, 
{\it Faculty of Economics, Okayama University}
} 
\\ 
Akimichi Takemura 
\footnote
{
{\it Graduate School of Information Science and Technology,
University of Tokyo} 
} 
\\
Hiroaki Terao 
\footnote
{
{\it Department of Mathematics, Hokkaido University}
}
}
\date{March 2008}

\maketitle

\begin{abstract}
An integral coefficient matrix determines an integral 
arrangement of hyperplanes in $\bbR^m$. 
After modulo $q$ reduction ($q\in \bbZ_{>0}$), the same matrix 
determines an arrangement $\calA_q$ 
of ``hyperplanes'' in $\bbZ_q^m$. 
In the special case of central arrangements,  
Kamiya, Takemura and Terao 
[{\it J. Algebraic Combin.}, to appear]  
showed that 
the cardinality of the complement of $\calA_q$  
in $\bbZ_q^m$ is a quasi-polynomial in $q\in \bbZ_{>0}$. 
Moreover, they proved in the central case that the intersection lattice of 
$\calA_q$ is periodic from some $q$ on. 
The present paper generalizes these results to the case of 
non-central arrangements. 
The paper also studies the arrangement $\hat{\calB}_m^{[0,a]}$ 
of Athanasiadis 
[{\it J. Algebraic Combin.} {\bf 10} (1999), 207--225]  
to illustrate our results. 

\smallskip
\noindent
{\it Key words}:  
characteristic quasi-polynomial, 
elementary divisor, 
hyperplane arrangement, 
intersection poset. 
\end{abstract}


\section{Introduction} 

An $m\times n$ integral coefficient matrix 
$C\in {\rm Mat}_{m\times n}(\bbZ)$ 
and a vector $b\in \bbZ^n$ of integral constant terms 
determine an arrangement $\calA$ 
of $n$ hyperplanes 
$H_j, \ 1\le j\le n$, in $\bbR^m$. 
In the same way, 
for a positive integer $q$, the modulo $q$ reductions 
of $C$ and $b$ determine an arrangement $\calA_q=\calA_q(C, b)$ 
of $n$ ``hyperplanes'' 
$H_{j, q}, \ 1\le j\le n$,  
in $\bbZ_q^m$, where $\bbZ_q:=\bbZ/q\bbZ$.   
In the special case $b=0$, which we call the central case, 
we showed in \cite{ktt} that  
the cardinality of the complement 
$M(\calA_q)$ of 
this arrangement in $\bbZ_q^m$ is a 
quasi-polynomial in $q\in \bbZ_{>0}$. 
In \cite{ktt} we called this quasi-polynomial the 
{\em
characteristic 
quasi-polynomial},  
and gave an explicit 
period $\period_0$ (called the lcm period in \cite{ktt-root}) of 
this 
quasi-polynomial, 
although a ``period collapse'' might occur and 
$\period_0$ may not be the minimum period in general. 
The characteristic quasi-polynomials of the arrangements of 
root systems and the mid-hyperplane arrangements (\cite{kott}) 
are studied in \cite{ktt-root}.  
In the present paper, we consider the general case where 
$b$ is not necessarily zero, which we call the 
{\em
non-central case}.   

In the non-central case, we show that the cardinality of the 
complement 
of the arrangement after modulo $q$ reduction 
remains to be a quasi-polynomial 
in $q$ with $\period_0$ as a period. 
However, unlike the central case $b=0$, this periodicity holds 
true not for all $q>0$ 
but for all $q>q_0$ for some $q_0\in \bbZ_{\ge 0}$. 
We give a bound $q_0$ explicitly, though it may not be the 
strict one 
in general. 

Besides, for each $q\in \bbZ_{>0}$, 
we can define a poset $L_q=L_q(C, b)$, 
called the 
{\em
intersection poset}, 
consisting of nonempty 
intersections of some of $H_{j,q}, \ 1\le j\le n$, with partial 
order defined by reverse inclusion.  
Then we can consider a sequence $L_1, L_2,\ldots$ and study 
its periodicity. 
With an appropriate definition of an isomorphism of 
the intersection posets, 
the sequence of isomorphism classes of 
$L_q, \ q=1,2,\ldots$, 
is shown to be periodic from some $q$ on 
(Section \ref{sec:posets}).  
We have to recognize 
the distinction between the periodicity 
of 
$|M(\calA_q)|$ and that of 
$L_q$. 
This distinction is illustrated with a simple example 
in Section \ref{sec:posets}. 
  
For the concepts in the theory of arrangements of hyperplanes, 
the reader is referred to \cite{ort}. 
Concerning general properties of 
quasi-polynomials, 
\cite{sta} is a basic reference.  


\

The organization of this paper is as follows. 
In Section \ref{sec:elementary-divisor} we 
present
a basic theory of
elementary divisors modulo $q$.
In Section \ref{sec:quasi-poly}, we prove that the cardinality of the complement 
of the arrangement after modulo $q$ reduction 
is a quasi-polynomial for all sufficiently large $q$. 
In Section \ref{sec:posets}, we 
investigate 
the periodicity of the intersection posets.  
In the final section, Section \ref{sec:Bm}, we 
study the arrangement $\hat{\calB}_m^{[0,a]}$ of 
Athanasiadis \cite{ath99} 
to illustrate our general results in the non-central 
case. 

\section{Elementary divisors modulo a positive integer}
\label{sec:elementary-divisor}

In this section we 
present
a basic theory of canonical forms and
elementary divisors of matrices with entries in $\bbZ_q$.  This is
needed in our developments in Sections \ref{sec:quasi-poly} and
\ref{sec:posets}.

Suppose an $m\times n$ 
matrix 
$
A
\in {\rm Mat}_{m\times n}(\bbZ_{q} )
$ 
is given.   The dimensions $m,n$ in 
this section are general and are not necessarily
equal to those in other sections.
Let
\[
GL_{k} (R) :=
\{
M \in 
{\rm Mat}_{k\times k}(R): 
\det M {\rm ~is~a~unit~in~}
R 
\} 
\]
for an arbitrary commutative ring $R$. 
We say that $A$ is equivalent to $B \in {\rm Mat}_{m\times n}(\bbZ_{q} )$,
denoted by
$A\sim B$,
if $B = PAQ$ for some
$P\in GL_{m} (\bbZ_{q})$   
and
$Q\in GL_{n} (\bbZ_{q})$.   
Our purpose is to 
find a canonical form of $A$
using elementary divisors.
For $a 
\in \bbZ$, we denote its $q$ reduction by 
$[a 
]_q:=a
+q\bbZ\in \bbZ_q$.
For an integral matrix or vector $A'$, 
let $[A']_q$ stand for the element-wise $q$ reduction of 
$A'$.

\begin{prop}
\label{prop:1}
Let $A\in {\rm Mat}_{m\times n}(\bbZ_{q})$ be 
an $m\times n$ matrix with entries in $\bbZ_q$. 
Then,  
\begin{enumerate}
\item[(i)] 
The matrix $A$ is equivalent to
\[
{\rm diag} ([d_{1}]_{q}, [d_{2}]_{q}, \dots, [d_{s}]_{q}, 
0, 0 , \dots , 0)
\]
for some integers $d_1,d_2,\ldots,d_s$ 
such that 
$0 < d_{1} \leq d_{2} \leq \dots \leq d_{s} < q$ 
and
$d_{1} \mid
d_{2} \mid
\dots \mid
d_{s} \mid
q.
$
\item[(ii)] 
The integers 
$d_{1}, d_{2}, \dots, d_{s}$ 
are uniquely determined by $A$. 
\end{enumerate}
\end{prop}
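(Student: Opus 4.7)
The plan is to reduce to the classical Smith normal form over $\bbZ$ and then adjust each diagonal entry modulo $q$. First I would choose any integer lift $\tilde A \in {\rm Mat}_{m\times n}(\bbZ)$ with $[\tilde A]_q = A$. The Smith normal form theorem over the PID $\bbZ$ furnishes $\tilde P \in GL_m(\bbZ)$ and $\tilde Q \in GL_n(\bbZ)$ with
\[
\tilde P \tilde A \tilde Q = \diag(e_1, e_2, \dots, e_r, 0, \dots, 0), \qquad 0 < e_1 \mid e_2 \mid \cdots \mid e_r.
\]
Since $\det \tilde P, \det \tilde Q \in \{\pm 1\}$ are units in $\bbZ_q$, the reductions $[\tilde P]_q$ and $[\tilde Q]_q$ lie in $GL_m(\bbZ_q)$ and $GL_n(\bbZ_q)$; conjugating $A$ by them gives $\diag([e_1]_q, \dots, [e_r]_q, 0, \dots, 0)$.

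Next I would replace each $[e_i]_q$ by $[\gcd(e_i, q)]_q$. Setting $d = \gcd(e_i, q)$, $u = e_i/d$, and $v = q/d$, one has $\gcd(u, v) = 1$. A short Chinese Remainder argument produces an integer $w$ with $w \equiv u \pmod v$ and $\gcd(w, q) = 1$: for primes $p \mid v$ the condition $p \nmid u$ automatically forces $p \nmid w$, while for primes $p \mid q$ with $p \nmid v$ one freely prescribes $w \equiv 1 \pmod p$ using CRT. Then $[w]_q \in \bbZ_q^\times$ and $[w]_q[d]_q = [e_i]_q$, so multiplying the $i$-th row by $[w]_q^{-1}$ (a unit diagonal operation) converts the diagonal entry $[e_i]_q$ into $[\gcd(e_i, q)]_q$. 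Entries with $q \mid e_i$ become zero and are moved into the trailing zero block via permutation matrices in $GL(\bbZ_q)$; the surviving entries $d_1, \dots, d_s$ satisfy $d_i < q$ and inherit the divisibility chain from $e_1 \mid \cdots \mid e_r$ (because $a \mid b$ implies $\gcd(a,q) \mid \gcd(b,q)$), proving (i).

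For uniqueness (ii), I would use the $\bbZ_q$-cokernel $\bbZ_q^m / A\bbZ_q^n$, which is invariant under the equivalence $A \sim PAQ$. For the canonical form from (i), this cokernel is isomorphic as an abelian group to
\[
(\bbZ/d_1\bbZ) \oplus \cdots \oplus (\bbZ/d_s\bbZ) \oplus (\bbZ/q\bbZ)^{m-s},
\]
and the sequence $d_1 \mid d_2 \mid \cdots \mid d_s \mid q \mid \cdots \mid q$ (with $m-s$ trailing copies of $q$) is a divisibility chain, hence is the invariant factor decomposition of this finite abelian group. The classical uniqueness of invariant factors then pins down $s$ and $(d_1, \dots, d_s)$.

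The only non-routine ingredient is the Chinese Remainder adjustment in the middle paragraph, which is where the obstacle to a purely formal reduction lies: over $\bbZ_q$ one cannot simply invoke the PID theory, so some hand work is needed to exhibit the unit $[w]_q$ and to verify that divisibility chains descend from $\bbZ$ to $\bbZ_q$. Once this arithmetic lemma is in place, existence reduces to the $\bbZ$-Smith normal form and uniqueness reduces to the fundamental theorem of finite abelian groups.
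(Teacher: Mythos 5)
Your proof of part (i) is correct and follows the same route as the paper: lift $A$ to an integer matrix, apply the Smith normal form over $\bbZ$, and replace each $[e_i]_q$ by $[\gcd(e_i,q)]_q$ on the grounds that these differ by a unit of $\bbZ_q$. The paper simply asserts the equivalence $\gcd\{a,q\}=\gcd\{b,q\}\Leftrightarrow [a]_q\doteq[b]_q$, whereas you make the unit explicit via the Chinese Remainder construction of $w$; that is a welcome expansion of the one step the paper leaves to the reader. (Note that no permutation is needed at the end: the divisibility chain $e_1\mid\cdots\mid e_r$ already forces the entries with $q\mid e_i$ to sit at the tail.) For part (ii) you take a genuinely different route. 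The paper proves a general lemma (Lemma \ref{2}) valid over an arbitrary commutative ring $R$: if two diagonal matrices with descending chains of diagonal ideals are equivalent, then the corresponding ideals $\alpha_iR$ and $\beta_iR$ coincide; the proof is a determinant argument with the ideals $I_k$ generated by lower-left blocks of $P$. You instead use the cokernel $\bbZ_q^m/A\bbZ_q^n$, which is an equivalence invariant, and appeal to the uniqueness of invariant factors of a finite abelian group. This is sound, with one bookkeeping point you should make explicit: if some $d_i=1$, the summand $\bbZ/d_1\bbZ$ is trivial and is dropped from the standard invariant factor decomposition, so you recover the full length-$m$ chain $d_1\mid\cdots\mid d_s\mid q\mid\cdots\mid q$ only after padding the list of nontrivial invariant factors with $1$'s up to length $m$ (and the normalization $d_s<q$ is what lets you separate the $d_i$'s from the trailing copies of $q$). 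The trade-off is that your argument is shorter and leans on familiar structure theory, while the paper's lemma is ring-independent and is the same tool it reuses implicitly elsewhere; both are valid here.
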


We call $d_1, \ldots, d_s\in \bbZ$, 
or $[d_1]_q,\ldots, [d_s]_q \in \bbZ_q$, 
the 
{\em
elementary divisors}
 of 
$A\in {\rm Mat}_{m\times n}(\bbZ_q)$. 

\begin{proof} 
Let $A' \in {\rm Mat}_{m\times n}(\bbZ)$ 
with $A = [A']_{q}. $ 
By the theory of elementary divisors over $\bbZ$, 
there exist
 $P'\in GL_{m}(\bbZ) $ 
and
 $Q'\in GL_{n}(\bbZ) $ 
 such that 
$P'A'Q' =
{\rm diag} (e_{1}, e_{2}, \dots , e_{\ell}, 
0, 0 , \dots , 0)
$
with 
$e_{1} \mid e_{2} \mid \dots \mid e_{\ell}, \ 
1\leq e_{1} \leq e_{2} \leq \dots \leq e_{\ell}$. 
Note that 
 $$
{\rm gcd} \{a, q \}
=
{\rm gcd} \{b, q \}
\Longleftrightarrow
[a]_{q} 
\doteq
[b]_{q} 
\Longleftrightarrow
[a]_{q} \bbZ_{q} 
=
[b]_{q} \bbZ_{q}
$$
for $a, b \in\bbZ_{>0}. $ 
Here $\doteq$ stands for the equality up to a unit multiplication
in $\bbZ_{q}$.
Define
\[
s
=
\max
\{
j: 
q
{\rm ~does~ not~ divide~}
e_j
\},
\qquad 
d_i=
{\rm gcd} \{ e_i, 
q \} 
\ \ (1\le i\le s). 
\]
Then 
$
[d_i]_q
\doteq
[e_i]_q, \  
1\le i\le s
$,
which proves (i). 
The uniqueness (ii) 
follows from Lemma \ref{2} below. 
\end{proof} 

\begin{lm}
\label{2}
Let $R$  be an arbitrary commutative ring.  
Let
$A$ and  $B$ be  
both $m\times n$ matrices with entries in $R$.  
Suppose that
$$
A
=
{\rm diag}(\alpha_{1}, \alpha_{2}, \dots, \alpha_{s}, 0, \dots , 0)
\,\,\,
{\rm and}
\,\,\,
B
=
{\rm diag}(\beta_{1}, \beta_{2}, \dots, \beta_{t}, 0, \dots , 0)
$$
satisfy
$$
\alpha_{1} R 
\supseteq 
\alpha_{2} R 
\supseteq 
\dots
\supseteq 
\alpha_{s} R 
\neq 
(0)
,
\,\,\,\,\,\,\,
\beta_{1} R 
\supseteq 
\beta_{2} R 
\supseteq 
\dots
\supseteq 
\beta_{t} R 
\neq 
(0)
$$ 
and
$
B
=
P A Q^{-1} 
$ 
with
$
P\in GL_{m} (R)
, \ 
Q\in GL_{n} (R).
$ 
Then 
$$s = t \,\,\,\,\,\,{\rm and}
\,\,\,\,\,\,
\alpha_{i} R
=
\beta_{i} R 
\ \ (1\le i\le s). 
$$
\end{lm}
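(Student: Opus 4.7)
The plan is to use the invariance of determinantal (Fitting) ideals under the equivalence relation $\sim$. For an arbitrary $m \times n$ matrix $M$ over $R$, let $I_k(M) \subseteq R$ denote the ideal generated by all $k \times k$ minors of $M$. The first step is to verify that $I_k(PMQ^{-1}) = I_k(M)$ for $P \in GL_m(R),\ Q \in GL_n(R)$. This is a standard consequence of the Cauchy--Binet formula, which expresses each $k \times k$ minor of $PMQ^{-1}$ as an $R$-linear combination of $k \times k$ minors of $M$; applying the same observation with $P, Q$ replaced by $P^{-1}, Q^{-1}$ gives the reverse inclusion. From the hypothesis $B = PAQ^{-1}$ we therefore obtain $I_k(A) = I_k(B)$ for every $k \ge 1$.

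The second step is to compute $I_k(A)$ and $I_k(B)$ explicitly. Since $A$ is diagonal, a $k \times k$ submatrix of $A$ has nonzero determinant only if its rows and columns are indexed by the same subset $I \subseteq \{1, \ldots, s\}$, and then the corresponding minor equals $\prod_{i \in I} \alpha_i$. Using the chain $\alpha_1 R \supseteq \cdots \supseteq \alpha_s R$, one has $\alpha_{i_j} \in \alpha_j R$ whenever $i_j \ge j$, so each such product is a multiple of $\alpha_1 \alpha_2 \cdots \alpha_k$. Hence
\[
I_k(A) = (\alpha_1 \alpha_2 \cdots \alpha_k) \text{ for } 1 \le k \le s, \qquad I_k(A) = 0 \text{ for } k > s,
\]
and the analogous description holds for $B$ with the $\beta_i$. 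In particular, $k = 1$ already gives $\alpha_1 R = \beta_1 R$.

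The third step is to extract the claim by induction on $k$. Given $\alpha_i R = \beta_i R$ for $i < k$, one deduces $\alpha_k R = \beta_k R$ from the equality $(\alpha_1 \cdots \alpha_k) = (\beta_1 \cdots \beta_k)$. Similarly, the case $k = \min(s,t)+1$ is used to force $s = t$: if for instance $s < t$, then $I_{s+1}(A) = 0$ while $I_{s+1}(B) = (\beta_1 \cdots \beta_{s+1})$ must also vanish, and this has to be reconciled with the chain $\beta_{s+1} R \supseteq \cdots \supseteq \beta_t R \neq 0$.

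The main obstacle is precisely this inductive step: cancelling the common factor $\alpha_1 \cdots \alpha_{k-1}$ from a principal ideal equality is not automatic over a commutative ring with zero divisors. The divisibility hypotheses are essential here, since they force both $\alpha_k$ and $\beta_k$ to lie in $\alpha_{k-1} R = \beta_{k-1} R$, and the nondegeneracy conditions $\alpha_s R \neq 0$ and $\beta_t R \neq 0$ rule out a premature collapse of the chain. I expect the resolution to go through the colon ideal $(\beta_1 \cdots \beta_k : \beta_1 \cdots \beta_{k-1})$, which contains $\beta_k R$ and, when sandwiched against the divisibility conditions on both sides, pins $\beta_k R$ down to $\alpha_k R$ and simultaneously enforces $s = t$.
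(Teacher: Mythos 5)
There is a genuine gap, and it sits exactly where you flagged it: step 3 cannot be carried out, because the determinantal (Fitting) ideals are too coarse an invariant over an arbitrary commutative ring. Your steps 1 and 2 are fine — Cauchy--Binet gives $I_k(A)=I_k(B)$, and the chain hypothesis does give $I_k(A)=(\alpha_1\cdots\alpha_k)$ — but the passage from $(\alpha_1\cdots\alpha_k)=(\beta_1\cdots\beta_k)$ for all $k$ back to $\alpha_kR=\beta_kR$ is false in the generality required. Take $R=\mathbb{Z}/4\mathbb{Z}$, $A=\mathrm{diag}(2,2)$, $B=\mathrm{diag}(2,0)$. Both satisfy the chain and nonvanishing hypotheses (with $s=2$, $t=1$), and their determinantal ideals coincide: $I_1=(2)$ and $I_2=(4)=(0)$ for both. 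Yet $s\neq t$ and $\alpha_2R=(2)\neq(0)=\beta_2R$. (The lemma is not contradicted: these two matrices are inequivalent, since their column spans have cardinalities $4$ and $2$; the point is that the data you extract from the equivalence no longer distinguishes them.) The colon ideal does not rescue the argument: in this example $(\beta_1\beta_2:\beta_1)=((0):(2))=(2)$, which strictly contains $\beta_2R=(0)$ and equals $(\alpha_1\alpha_2:\alpha_1)$, so it cannot separate the two cases. Over a PID (or any ring with the relevant cancellation) your induction closes, but the lemma is invoked in the paper precisely for $R=\mathbb{Z}_q$, where such cancellation fails.

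The paper's proof avoids minors of $A$ altogether and instead exploits the entries of $P$. For fixed $k$ it forms the ideal $I_k$ generated by $\{p_{ij}:\,1\le j\le k\le i\le m\}$ and notes that $\det P\in I_k$, since every term $p_{1\sigma(1)}\cdots p_{m\sigma(m)}$ of the Leibniz expansion contains some $p_{i\sigma(i)}$ with $\sigma(i)\le k\le i$ (pigeonhole on $\{k,\dots,m\}$). Comparing $(i,j)$-entries of $PA=BQ$ gives $\alpha_jp_{ij}\in\beta_iR$, and the two divisibility chains then yield $\alpha_kp_{ij}\in\alpha_jp_{ij}R\subseteq\beta_iR\subseteq\beta_kR$ for all such $(i,j)$; hence $\alpha_k\det P\in\beta_kR$, and invertibility of $\det P$ gives $\alpha_kR\subseteq\beta_kR$ for every $k\le m$ (with $\alpha_{s+1}=\dots=\alpha_m=0$). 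Symmetry gives equality, and $s=t$ then follows from $\alpha_sR\neq(0)\neq\beta_tR$. If you want to salvage your framework, you would have to replace the determinantal ideals by an invariant at least as fine as this entrywise argument; as written, the approach cannot be completed.
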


\begin{proof} 
We may assume $m\leq n$ without loss of generality.
Define 
$$
\alpha_{s+1} 
=
\dots
=
\alpha_{m} 
=
\beta_{t+1} 
=
\dots
=
\beta_{m} 
=
0.
$$ 
Let $1\leq k \leq m$.
Define 
$I_{k} $ to be the ideal of $R$ generated by
$
\{
p_{ij}: 
1\leq j\leq k \leq i \leq m
\},
$  
where $p_{ij} $ is the 
$(i, j)$-entry
of $P$.  
(For example,
$I_{1} $ is the ideal generated by
all entries in the first column of $P$.)
Then
$\det P \in I_{k} $ for each $k$
because the product
$
p_{1\sigma(1)} 
p_{2\sigma(2)}
\dots 
p_{m\sigma(m)} 
$ 
for every permutation $\sigma$ 
belongs to the ideal $I_{k} $.
Since
$\alpha_{j} p_{ij} $ 
is the $(i, j)$-entry of
$PA = B Q$,
one has
$\alpha_{j} p_{ij} 
\in
\beta_{i} R.
$ 
Thus
\[
\alpha_{k} p_{ij} R \subseteq
\alpha_{j} p_{ij} R \subseteq
\beta_{i} R \subseteq
\beta_{k} R 
\]
for
$1\leq j\leq k\leq i\leq m$.
This shows
$
\alpha_{k} (\det P) \in \beta_{k} R
$
and
$
\alpha_{k} R \subseteq \beta_{k} R.
$ 
The converse is symmetric.
\end{proof}

For  $A \in {\rm Mat}_{m\times n}(\bbZ_{q} )$, let 
$\im A$ denote the  submodule of the $\bbZ_q$-module $\bbZ_{q}^m$ generated by the
columns of $A$.  For $b\in \bbZ_{q}^m$, we are interested in
determining whether $b\in \im A$ or not.  Let 
$(A,b) \in {\rm Mat}_{m\times (n+1)}(\bbZ_{q} )$ be an $m\times (n+1)$ matrix 
obtained by appending $b$ 
to $A$ as the last column.

\begin{lm}
\label{lm:subgroup}
For $A \in {\rm Mat}_{m\times n}(\bbZ_{q} )$ and 
$b\in \bbZ_{q}^m$, we have 
$b\in \im A$ if and only if the elementary divisors of $(A,b)$ are the
same  as those of $(A, 0)$.
\end{lm}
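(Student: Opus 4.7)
The plan is to establish the equivalence by rephrasing the condition on elementary divisors in terms of matrix equivalence, namely $(A,b)\sim (A,0)$, and then comparing the column modules. By Proposition \ref{prop:1}, two matrices over $\bbZ_q$ of the same shape have the same elementary divisors if and only if they are equivalent (since they share the same canonical form). So the lemma is equivalent to showing that $b\in\im A$ holds if and only if $(A,b)\sim(A,0)$.

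For the forward direction, assume $b\in \im A$, i.e., $b=Av$ for some $v\in\bbZ_q^n$. Then the right multiplication
\[
(A,b)\begin{pmatrix} I_n & -v \\ 0 & 1 \end{pmatrix} = (A,\,b-Av) = (A,0),
\]
where the $(n+1)\times(n+1)$ shear matrix lies in $GL_{n+1}(\bbZ_q)$, establishes $(A,b)\sim(A,0)$.

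For the backward direction, assume $(A,b)\sim(A,0)$, so $(A,b)=P^{-1}(A,0)Q^{-1}$ for some $P\in GL_m(\bbZ_q)$, $Q\in GL_{n+1}(\bbZ_q)$. Right multiplication by an invertible $Q^{-1}$ preserves the column module, while left multiplication by $P^{-1}$ is a $\bbZ_q$-module isomorphism of $\bbZ_q^m$, hence bijective. Consequently
\[
\im(A,b) = P^{-1}\im(A,0) = P^{-1}\im A,
\]
and in particular $|\im(A,b)|=|\im A|$. On the other hand, the columns of $A$ are among the columns of $(A,b)$, which gives the inclusion $\im A\subseteq \im(A,b)$. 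Two finite submodules with the same cardinality, one contained in the other, must coincide; therefore $\im(A,b)=\im A$, which in particular yields $b\in\im A$.

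The main subtlety — and the only nonroutine step — lies in the backward direction: equivalence of matrices only tells us $\im(A,b)$ and $\im(A,0)$ are related by the $\bbZ_q$-module automorphism $P^{-1}$ of $\bbZ_q^m$, not that they are the same submodule. The key observation that upgrades the cardinality equality to set equality is the tautological containment $\im A\subseteq\im(A,b)$, which breaks the symmetry between $(A,b)$ and $(A,0)$ that the raw equivalence relation loses sight of.
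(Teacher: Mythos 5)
Your proof is correct and follows essentially the same route as the paper: the forward direction by column operations, and the backward direction by combining the tautological containment $\im A\subseteq\im(A,b)$ with equality of cardinalities. The only cosmetic difference is that the paper obtains the equal cardinality by computing it explicitly as $q^s/(d_1\cdots d_s)$ from the elementary divisors, whereas you deduce it from the matrix equivalence $(A,b)\sim(A,0)$ and the fact that left multiplication by an invertible matrix is an automorphism of $\bbZ_q^m$; both hinge on the same containment-plus-cardinality argument.
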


\begin{proof}
Suppose that $b\in \im A$.  Then by elementary column operations, 
we can transform $(A,b)$ to $(A,0)$.  Therefore, the elementary divisors
of $(A,b)$  are the same as those of 
$(A, 0)$. 

Suppose that both 
$(A,b)$ and $(A, 0)$ have the same elementary divisors 
$d_1, \ldots, d_s$. 
Then $\im (A, b)$  and
$\im (A, 0)$ have the same cardinality 
$q^s/(d_1 \cdots d_s)$. 
Since $\im (A,b) \supseteq \im (A, 0)$ always
holds, we have
$\im (A,b) = \im (A, 0)$.
\end{proof}

\section{Characteristic quasi-polynomial in non-central case} 
\label{sec:quasi-poly}

Let $m, n \in \bbZ_{>0}$ be positive integers. 
Suppose an $(m+1)\times n$ integral matrix 
$$
\begin{pmatrix}
C \\ 
b
\end{pmatrix}
\in {\rm Mat}_{(m+1)\times n}(\bbZ)
$$ 
is given, where 
$C=(c_1,\ldots,c_n)\in {\rm Mat}_{m\times n}(\bbZ)$ 
with 
$c_j=(c_{1j},\ldots,c_{mj})\trans\ne (0,\ldots,0)\trans, \ 1\le j\le n$,  
and $b=(b_1,\ldots,b_n)\in \bbZ^n$.  
Define 
$$
H_{j,q}=H_{q}(c_j, b_j):=\{ z=(z_1,\ldots,z_m)\in \bbZ_q^m: z[c_j]_q= [b_j]_q \} 
$$
and 
$$
\calA_q=\calA_{q}(C, b):=\{ H_{j,q}: 1\le j\le n\}. 
$$ 
When $q$ is not a prime, it is not appropriate to call 
$H_{j,q}$ a hyperplane, but by abuse of terminology we 
call $H_{j,q}$ a 
{\em
hyperplane} also in such cases.  
Denote the complement of $\calA_q$ by 
$$ 
M(\calA_q)
:=\bbZ_q^m\setminus \bigcup_{H\in \calA_q}H  
=\{ z\in \bbZ_q^m: z[C]_q-[b]_q\in (\bbZ_q^{\times})^n\}, 
$$ 
where 
$\bbZ_q^{\times}:=\bbZ_q\setminus \{ 0 
\}$.  
Then we have 
\begin{equation}
\label{eq:|M|=q+}
|M(\calA_q)|
=q^m+\sum_{\emptyset \ne J\subseteq [n]}(-1)^{|J|}|H_{J,q}| 
\end{equation}
with  
$$
H_{J,q}
:=\bigcap_{j\in J}H_{j,q} 
\quad 
\text{ for \ } 
J\subseteq [n]:=\{ 1,2,\ldots,n\}  
$$
(In Section \ref{sec:posets}, we have to consider 
$H_{J,q}$ for $J=\emptyset$, in which case 
we understand that $H_{\emptyset,q}=\bbZ_q^m$).   

For $J=\{ j_1,\ldots,j_{|J|}\}\subseteq [n], \ 
1\le |J|\le n, \ j_1<\cdots<j_{|J|}$, 
denote  
\begin{eqnarray*}
C_J&:=&(c_{j_1},\ldots,c_{j_{|J|}})\in {\rm Mat}_{m\times |J|}(\bbZ), \\ 
b_J&:=&(b_{j_1},\ldots,b_{j_{|J|}})\in \bbZ^{|J|}. 
\end{eqnarray*}  
Using $C_J$ and $b_J$, we can write $H_{J,q}$ 
as 
$$
H_{J,q}=
H_{q}(C_J, b_J):=\{ z\in \bbZ_q^m: z[C_J]_q=[b_J]_q\}.  
$$ 
Now, let $f_J: \bbZ^m \to \bbZ^{|J|}$ be a $\bbZ$-homomorphism 
defined by $z \mapsto zC_J$, and 
\begin{equation}
\label{eq:fJq}
f_{J,q}: \bbZ_q^m \to \bbZ_q^{|J|}
\end{equation} 
the induced morphism 
$z \mapsto z[C_J]_q$. 
When $[b_J]_q\in {\rm im}f_{J,q}$, i.e., 
$z_0[C_J]_q=[b_J]_q$ for some $z_0\in \bbZ_q^m$,  
we have 
$H_q(C_J, b_J)=z_0+H_q(C_J,0)
:=\{ z_0+z: z\in H_q(C_J,0) \}$;  
when $[b_J]_q\notin {\rm im}f_{J,q}$, on the other hand, 
we have $H_q(C_J, b_J)=\emptyset$.  
Hence  
\begin{equation} 
\label{eq:H=H-b-in-im}
|H_{q}(C_J, b_J)|=
\begin{cases}
|H_{q}(C_J, 0)| & \text{if $[b_J]_q\in {\rm im}f_{J,q}$,} \\ 
0 & \text{otherwise.} 
\end{cases}
\end{equation}

Let 
$$
A_J:=
\begin{pmatrix}
C_J \\ 
b_J 
\end{pmatrix}
\in {\rm Mat}_{(m+1)\times |J|}(\bbZ).  
$$
By Lemma \ref{lm:subgroup}, we know that 
$[b_J]_q\in {\rm im}f_{J,q}$ if and only if
the elementary divisors of $[C_J]_q$ are the same as those of
$[A_J]_q$.   As seen in the proof of Proposition \ref{prop:1}, 
the elementary divisors of $[A_J]_q$ and $[C_J]_q$ are obtained 
by $q$ reduction of the 
elementary divisors of $A_J$ and $C_J$ over $\bbZ$, respectively. 
Let $
e_{J,1} | e_{J,2} | \cdots | e_{J,\ell(J)}$ denote the elementary
divisors of $C_J$  and let 
$
e'_{J,1} | e'_{J,2} | \cdots | e'_{J, \ell'(J)}$ denote the elementary divisors of 
$A_J$. 
Denote $d_{J,j}(q):=\gcd\{ e_{J,j}, q\}, \ 1\le j\le \ell(J)$, 
and $d'_{J,j}(q):=\gcd\{ e'_{J,j}, q\}, \ 1\le j\le \ell'(J)$. 
The elementary divisors of $[C_J]_q$ are 
$d_{J,j}, \ 1\le j\le s(J)$,
where 
\[
s(J)=s(J,q):=
\max \{ j: 1\le j\le \ell(J), \ q\not| \ e_{J,j} \}
=\max\{ j: 1\le j\le \ell(J), \ d_{J,j}(q)<q\}.   
\] 
Similarly, the elementary divisors of $[A_J]_q$ are 
$
d'_{J,j}(q), \ 1\le j\le s'(J)$, 
with $s'(J)=s'(J,q):=
\max\{ j: 1\le j\le \ell'(J), \ q \not| \ e'_{J,j}\}$. 
Note 
that
$\ell(J)=\rank C_J$, 
$\ell'(J)=\rank A_J$ and
that 
$\ell'(J) - \ell(J) = 0  \text{ or }  1$.

First, consider the case of $J$ for which 
$\rank A_J=\rank C_J$. 
For those $J$, we have 
the following equivalence: 
\begin{eqnarray}
\label{eq:equiv-d=d}
&& s(J)=s'(J) \text{ and } 
d_{J,j}(q)=d'_{J,j}(q), \ 1\le j\le s(J) \\ 
&& \qquad \iff 
d_{J,j}(q)=d'_{J,j}(q), \ 1\le j\le \ell(J). 
\nonumber 
\end{eqnarray}
(The implication $\Longrightarrow$ is seen by noting that 
for any $j>s(J)=s'(J)$ we have $d_{J,j}(q)=q=d'_{J,j}(q)$.)
Then, \eqref{eq:H=H-b-in-im} and 
\eqref{eq:equiv-d=d} imply 
\begin{equation}
\label{eq:H=IH}
|H_{q}(C_J, b_J)|= 
\begin{cases}
|H_q(C_J, 0)| & \text{ if } 
d_{J,j}(q)=d'_{J,j}(q) \text{ for all } j=1,\ldots,\ell(J), \\ 
0 & \text{ otherwise.} 
\end{cases}
\end{equation}
In Lemma 2.1 of \cite{ktt}, we showed that  
\begin{equation}
\label{eq:H=dq}
|H_{q}(C_J, 0)|=d_J(q)q^{m-\ell(J)},
\end{equation}
where $d_J(q):=\prod_{j=1}^{\ell(J)} d_{J,j}(q)$.
By \eqref{eq:H=IH} and \eqref{eq:H=dq}, we get 
\begin{equation}
\label{eq:H=dq^}
|H_{J,q}|=
|H_{q}(C_J, b_J)|=\tilde{d}_J(q)q^{m-\ell(J)},
\end{equation}
where 
$$
\tilde{d}_J(q)=
\begin{cases}
d_J(q) & \text{ if } 
d_{J,j}(q)=d'_{J,j}(q) \text{ for all } j=1,\ldots,\ell(J), \\ 
0 & \text{ otherwise.} 
\end{cases}
$$
Now, write $e(J):=e_{J,\ell(J)}$ and define 
$$
\period_0:={\rm lcm}\{ e(J): J\subseteq [n], \ J\ne \emptyset \}. 
$$
Then $d_{J,j}(q+\period_0)=\gcd\{ e_{J,j}, q+\period_0\}
=d_{J,j}(q)$ 
because $e_{J,j} | \period_0, \ 1\le j\le \ell(J)$. 
Moreover, 
$e'(J):=e'_{J,\ell'(J)}|e(J)|\period_0$ 
by Lemma 2.3 of \cite{ktt}, 
and hence 
$d'_{J,j}(q+\period_0)=d'_{J,j}(q), \ 
1\le j\le \ell(J)$, in a similar manner. 
Therefore,  
\begin{equation}
\label{eq:d=d}
\tilde{d}_{J}(q+\period_0)=\tilde{d}_{J}(q)
\end{equation} 
for all nonempty $J\subseteq [n]$. 

Next, consider the case of $J$ for which 
$\rank A_J=\rank C_J+1$. 
Then for any $q>e'(J)
$, there are 
$s'(J)=\ell'(J)=
\ell(J)+1$ elementary divisors of $[A_J]_q$, whereas
there are 
only $s(J)\le \ell(J)$ 
elementary divisors of $[C_J]_q$.
Thus we can conclude 
\begin{equation}
\label{eq:H=0}
|H_{J,q}|=
|H_{q}(C_J, b_J)|=0 \text{ \ for all } 
q>e'(J). 
\end{equation}

Equations \eqref{eq:|M|=q+}, \eqref{eq:H=dq^} 
and \eqref{eq:H=0} yield 
\begin{equation}
\label{eq:|M|=sum}
|M(\calA_q)|
=q^m+\sum_{J: \, \rank A_J=\rank C_J}
(-1)^{|J|} \tilde{d}_J(q)q^{m-\ell(J)} 
\end{equation}
for all $q\in \bbZ_{>0}$ with 
\begin{equation}
\label{eq:q>max=q0}
q> 
\max\{ e'(J): 
\rank A_J=\rank C_J+1, \ J\ne \emptyset \}
=:q_0   
\end{equation} 
%
(When $\{ J\subseteq [n]: \rank A_J=\rank C_J+1, \ 
J\ne \emptyset\}=\emptyset$, we understand 
that $q_0=0$). 
Note that whether $\rank A_J=\rank C_J$ or 
$\rank A_J=\rank C_J+1$ 
does not depend on $q$. 
\begin{rem} 
Consider the case of a central arrangement: $b=0$. 
In that case, we have (i) $\{ J: \rank A_J=\rank C_J\}$ 
equals the 
set of all nonempty subsets $J\subseteq [n]$, 
and (ii) $\tilde{d}_J(q)=d_J(q)$ because $d_{J,j}(q)=d'_{J,j}(q)$
for all $j=1,\ldots,\ell(J)$. 
Therefore, in the case of a central arrangement, 
\eqref{eq:|M|=sum} with \eqref{eq:q>max=q0} 
reduces to the result obtained in \cite{ktt}. 
\end{rem}
When actually calculating $q_{0}$, we can 
do without 
$J$'s with $|J|> m+1$: 
\begin{equation}
\label{eq:q0<=m+1}
q_{0}=
\max\{ e'(J): 
\rank A_J=\rank C_J+1, \ 1\le |J|\le m+1 \}. 
\end{equation}
Equation \eqref{eq:q0<=m+1} follows from the following argument: 
For any $J$ with $\rank A_J=\rank C_J+1$ and $|J|>m+1$, 
we can find a subset $J'\subset J$ 
such that $|J'|=\rank A_{J'}=\rank A_J \ (\le m+1)$. 
This $J'$ satisfies $\rank A_{J'}=\rank C_{J'}+1$ because 
$\rank A_{J'}=\rank A_{J}=\rank C_{J}+1\ge \rank C_{J'}+1$.
Now, since $J'\subset J$ and $\rank A_{J'}=\rank A_J$, 
Lemma 2.3 of \cite{ktt} implies that   
$e'(J)
|e'(J')
\ne 0$. 
Therefore, we have \eqref{eq:q0<=m+1}.   

Now, \eqref{eq:|M|=sum} together with 
\eqref{eq:d=d} implies that 
$|M(\calA_q)|$ is a quasi-polynomial in $q>q_0$ with a period $\period_0$. 
In fact, it is a monic integral quasi-polynomial of degree $m$. 
Furthermore, since $d_{J,j}(\gcd\{\period_0, q\})
=\gcd\{ e_{J,j}, \gcd\{\period_0, q\}\}
=\gcd\{ e_{J,j}, \period_0, q\}
=d_{J,j}(q), \ 1\le j\le \ell(J)$,  
and 
$d'_{J,j}(\gcd\{\period_0, q\})
=d'_{J,j}(q), \ 1\le j\le \ell'(J)$, 
we have 
$$
\tilde{d}_J(q)=\tilde{d}_J(\gcd\{ \period_0, q\})
$$
when $\ell(J)=\ell'(J)$. 
So we can see from \eqref{eq:|M|=sum} that the constituents of the quasi-polynomial 
$|M(\calA_q)|, \ q>q_0$, coincide for all $q$ with the same value 
$\gcd\{ \period_0, q\}$. 

By the discussions so far, we obtain the following theorem: 

\begin{theorem}
\label{th:|M|}
There exist monic polynomials 
$P_1(t),\ldots,P_{\period_0}(t) \in \bbZ[t]$ of degree $m$ 
such that 
$|M(\calA_q)|=P_r(q) \ (q\in r+\period_0\bbZ_{\ge 0}, \ 
1\le r\le \period_0)$ 
for all integers $q>q_0$. 
Moreover, polynomials $P_r(t) \ (1\le r\le \period_0)$ 
depend on $r$ only through $\gcd\{ \period_0, r\}$.   
\end{theorem}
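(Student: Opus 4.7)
The plan is to assemble the ingredients that have already been laid out in the paragraphs preceding the theorem; essentially nothing new needs to be proved, only organized carefully. The starting point is the identity \eqref{eq:|M|=sum}, which, for every $q > q_0$, expresses
\[
|M(\calA_q)| = q^m + \sum_{J:\,\rank A_J=\rank C_J} (-1)^{|J|}\tilde{d}_J(q)\, q^{m-\ell(J)}.
\]
Because the index set in the sum is finite and does not depend on $q$, once we show that each function $q \mapsto \tilde{d}_J(q)$ is constant on residue classes modulo $\period_0$, the right hand side becomes, residue class by residue class, an integer polynomial in $q$.

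First I would fix $r \in \{1,\dots,\period_0\}$ and define
\[
P_r(t) := t^m + \sum_{J:\,\rank A_J=\rank C_J} (-1)^{|J|}\tilde{d}_J(r)\, t^{m-\ell(J)} \in \bbZ[t].
\]
The $\period_0$-periodicity \eqref{eq:d=d}, namely $\tilde{d}_J(q+\period_0)=\tilde{d}_J(q)$, immediately implies $\tilde{d}_J(q)=\tilde{d}_J(r)$ for every $q \in r+\period_0\bbZ_{\ge 0}$; combined with \eqref{eq:|M|=sum}, this yields $|M(\calA_q)|=P_r(q)$ for all such $q$ with $q>q_0$. The polynomial $P_r(t)$ is monic of degree $m$ because all other summands have degree $m-\ell(J) \le m-1$ (as $\ell(J)\ge 1$ for nonempty $J$), and it lies in $\bbZ[t]$ since each $\tilde{d}_J(r)$ is an integer.

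For the second assertion, I would invoke the identity $\tilde{d}_J(q)=\tilde{d}_J(\gcd\{\period_0,q\})$ derived in the paragraph just before the theorem (which itself follows from $d_{J,j}(\gcd\{\period_0,q\})=d_{J,j}(q)$ and the analogous statement for $d'_{J,j}$). If $r,r' \in \{1,\dots,\period_0\}$ satisfy $\gcd\{\period_0,r\}=\gcd\{\period_0,r'\}$, then for each admissible $J$ one has $\tilde{d}_J(r)=\tilde{d}_J(\gcd\{\period_0,r\})=\tilde{d}_J(\gcd\{\period_0,r'\})=\tilde{d}_J(r')$, hence $P_r(t)=P_{r'}(t)$ coefficient by coefficient.

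There is no real obstacle here: every nontrivial piece (the decomposition \eqref{eq:|M|=sum}, the periodicity \eqref{eq:d=d}, and the $\gcd$-reduction for $\tilde{d}_J$) has already been established in the discussion leading up to the theorem. The only point requiring slight care is making sure the representative $r$ used to define $P_r$ is valid for every element of the residue class, which is exactly what \eqref{eq:d=d} guarantees, and that $q^m$ contributes to the monic leading coefficient uniformly across all residue classes.
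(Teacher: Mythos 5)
Your proposal is correct and follows essentially the same route as the paper, which proves the theorem simply by citing "the discussions so far," i.e.\ the decomposition \eqref{eq:|M|=sum}, the periodicity \eqref{eq:d=d}, and the identity $\tilde{d}_J(q)=\tilde{d}_J(\gcd\{\period_0,q\})$. You have merely made explicit the bookkeeping (defining $P_r$ via $\tilde{d}_J(r)$, checking monicity from $\ell(J)\ge 1$) that the paper leaves implicit.
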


A period $\period_0$ is the same period that was 
used in the central case in \cite{ktt} and \cite{ktt-root}. 
In \cite{ktt-root} this period was called the lcm period. 
Theorem \ref{th:|M|} implies that this $\period_0$ continues 
to be a period of $|M(\calA_q)|, \ q\in \bbZ_{>0}$, for the 
general case of non-centrality.  
However, 
unlike the central case $b=0$,  
we have to ignore 
$q\le q_0$ when $b\ne 0$. 
This exclusion of small $q$'s in the case $b\ne 0$ 
is actually needed in general.  
This can be seen from the following simple example. 
 
Let $m=1, \ n=2, \ C=(1,1)$ and $b=(1,-1)$. 
Then $H_{1,q}=\{ [1]_q\}$ 
and $H_{2,q}=\{ [-1]_q\}$. 
Thus $H_{1,q}=H_{2,q}$ if and only if $q=1,2$.  
Therefore, 
$$
|M(\calA_q)|=
\begin{cases}
q-1 & \text{for $q=1,2$,} \\ 
q-2 & \text{for $q\ge 3$.} 
\end{cases}
$$
This expression implies that $|M(\calA_q)|$ is 
a polynomial (a quasi-polynomial with the minimum period one) 
in $q\in \bbZ_{>0}$ for $q>2$ but not for all 
$q\ge 1$. 
We can calculate 
$q_0=2$, 
from which we see that $q_0$ is the strict bound for $q$ 
in this example. 
In addition, $\period_0=1$, so 
$\period_0$ equals the minimum period in this case. 


\section{Periodicity of intersection posets}
\label{sec:posets}

In this section, we study the periodicity of the intersection 
posets. 

For each $q\in \bbZ_{>0}$, 
the intersection poset is defined to be the set 
$$
L_q=L_q(C, b):=\{ H_{J,q}\ne \emptyset : J \subseteq [n] \}
$$ 
equipped with the partial order by reverse inclusion. 
When considering the periodicity of the 
sequence $L_1, L_2,\ldots$, 
we have to be careful about the definition of an isomorphism of 
$L_q, \ q\in \bbZ_{>0}$. 
By way of example, let us consider the following simple case. 

Let $m=1, \ n=2$, and 
consider the central case 
\begin{equation}
\label{eq:A=3400}
\begin{pmatrix}
C \\  
b
\end{pmatrix}
=
\begin{pmatrix}
3 & 4 \\  
0 & 0 
\end{pmatrix}. 
\end{equation}
For this pair of coefficient matrix 
and constant vector 
\eqref{eq:A=3400}, 
we have 
$\period_0={\rm lcm}\{ 3,4\}=12$,  
and $H_{\{ 1,2\}, q}=\{ 0 \}$ for all $q\in \bbZ_{>0}$  
(Note that $3z=4z=0$ implies $z=4z-3z=0$). 
The intersection posets 
$L_q=L_q(C, b), \ q\in \bbZ_{>0}$, are given as follows:  
\begin{equation}
\label{eq:V<H<H}
\begin{cases}
V<H_{1,q}=H_{2,q}=H_{\{ 1,2\},q} 
& \text{ $\gcd\{ 12, q\}=1$,} \\ 
V<H_{2,q}=\{ 0, \frac{q}{2}\}<H_{1,q}=H_{\{ 1,2\},q} 
& \text{ $\gcd\{ 12, q\}=2$,} \\ 
V<H_{1,q}=\{ 0, \frac{q}{3}, \frac{2q}{3}\}<H_{2,q}
=H_{\{ 1,2\},q} 
& \text{ $\gcd\{ 12, q\}=3$,} \\ 
V<H_{2,q}=\{ 0, \frac{q}{4}, \frac{2q}{4}, \frac{3q}{4}\}
<H_{1,q}=H_{\{ 1,2\},q} 
& \text{ $\gcd\{ 12, q\}=4$,} \\ 
V<H_{1,q}=\{ 0, \frac{q}{3}, \frac{2q}{3}\}<H_{\{ 1,2\},q}, 
\ 
V<H_{2,q}=\{ 0, \frac{q}{2}\}<H_{\{ 1,2\},q} 
& \text{ $\gcd\{ 12, q\}=6$,} \\ 
V<H_{1,q}=\{ 0, \frac{q}{3}, \frac{2q}{3}\}<H_{\{ 1,2\},q}, 
\ 
V<H_{2,q}=\{ 0, \frac{q}{4}, \frac{2q}{4}, \frac{3q}{4}\}<H_{\{ 1,2\},q} 
& \text{ $\gcd\{ 12, q\}=12$,} \\ 
\end{cases}
\end{equation}
where $V:=\bbZ_q^m$. 
Here, we are writing, e.g., $\{ 0, \frac{q}{3}, \frac{2q}{3}\}$ 
instead of  
$\{ [0]_q, [\frac{q}{3}]_q, [\frac{2q}{3}]_q\}$ for simplicity. 
 
According to the usual definition 
of a poset isomorphism,  
$L_q$'s are isomorphic to one another for all $q$'s 
with $\gcd\{ 12, q\}=2, 3$ or $4$, and so are $L_q$'s 
for all $q$'s with $\gcd\{ 12, q\}=6$ or $12$. 
However, here we do not want to consider $L_q$ for 
$\gcd\{ 12, q\}=2, 4:$ 
$$
V<H_{2,q}<H_{1,q}=H_{\{ 1,2\},q}
$$ 
and $L_q$ for $\gcd\{ 12, q\}=3:$ 
$$
V<H_{1,q}<H_{2,q}=H_{\{ 1,2\},q}
$$ 
to be isomorphic to each other. 
This is because of the following. 

We are dealing not with one intersection poset  
but with a sequence of intersection posets $L_1, L_2, \ldots$  
obtained for 
a 
fixed numbering of hyperplanes $H_{j,q}, \ j \in [n]$.  
Thus, it is not appropriate to allow a permutation of 
indices $j\in [n]$ tailored for each $L_q, \ q\in \bbZ_{>0}$, 
separately.  

On the other hand, because our concern is the 
periodicity of the sequence $L_1, L_2,\ldots$, 
we may take the numbering of hyperplanes 
as a given one, and do not have to care about 
what the fixed numbering 
on which the sequence $L_1, L_2,\ldots$ is based 
is. 

Based on these considerations, we adopt, 
in this paper, the following definition of 
the isomorphism of the intersection posets $L_q=L_q(C, b), \ 
q\in \bbZ_{>0}$. 

\begin{definition} 
\label{def:iso}
Intersection posets 
$L_q=\{ H_{J,q}\ne \emptyset: J \subseteq [n]\}$ 
and $L_{q'}=\{ H_{J,q'}\ne \emptyset: 
J \subseteq [n]\}, \ q, q'\in \bbZ_{>0}$, 
are defined to be 
isomorphic to each other iff the following conditions 
hold true: 
$$
H_{J,q}\in L_q \iff H_{J,q'}\in L_{q'}   
$$
for all $J \subseteq [n]$, and  
$$
H_{J_1,q}\le H_{J_2,q} \iff H_{J_1,q'}\le H_{J_2,q'} 
$$
for all $J_1, J_2 \subseteq [n]$ such that 
$H_{J_1,q}, H_{J_2,q}\in L_q$ and $H_{J_1,q'}, H_{J_2,q'}\in L_{q'}$.   
\end{definition}

For our example \eqref{eq:A=3400}, we see from 
\eqref{eq:V<H<H} that 
$L_q$ with $\gcd\{ 12, q\}=2$ or $4$ is not 
isomorphic to $L_q$ with $\gcd\{ 12, q\}=3$.

We are now in a position to investigate 
the periodicity of the sequence of 
isomorphism classes of $L_q, \ q\ge 1$. 
We continue to assume 
$c_j=(c_{1j},\ldots,c_{mj})\trans \ne (0,\ldots,0)\trans$ 
for all $j=1,\ldots,n$. 
Let  
\begin{eqnarray*}
q_1
&:=&\max\{ e(J\cup \{ j\}): \rank C_{J\cup \{ j\}}
=\rank C_{J}+1, \ 
j\in [n], \ J \ne \emptyset \} \\ 
&=&\max\{ e(J\cup \{ j\}): \rank C_{J\cup \{ j\}}
=\rank C_{J}+1, \ 
j\in [n], \ 1\le |J|\le m-1 \} 
\end{eqnarray*}
(When there is no pair $(J, j)$ satisfying 
$\rank C_{J\cup \{ j\}}=\rank C_{J}+1$, 
we understand that $q_1=0$). 
Define 
\[
q^*:=\max\left\{ q_0, q_1, \max_{1\le j\le n}\gcd\{ c_{1j},\ldots,c_{mj} \}\right\}\ge 1, 
\]
where $\gcd\{ c_{1j},\ldots,c_{mj} \}, \ 1\le j\le n$, are taken to be positive.  
%
Then we can prove the following theorem. 

\begin{theorem} 
\label{th:periodicity-poset}
Suppose $q, q'\in \bbZ_{>0}$ satisfy $q,q'>q^*$ 
and $\gcd\{\period_0,q\}=\gcd\{\period_0,q'\}$.  
Then we have the following: 
\renewcommand{\labelenumi}{(\roman{enumi})}
\begin{enumerate}
\item 
For any $J\subseteq [n]$, we have  
$H_{J,q}=\emptyset$ if and only if $H_{J,q'}=\emptyset$. 
\item For any $j\in [n]$ and any $J\subseteq [n]$ 
such that $H_{J,q}\ne \emptyset$ and $H_{J,q'}\ne \emptyset$, 
we have 
$H_{j,q}\supseteq H_{J,q}$ if and only if 
$H_{j,q'}\supseteq H_{J,q'}$. 
\end{enumerate}
\end{theorem}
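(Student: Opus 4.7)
The plan is to reduce both claims to cardinality facts about $H_{J,q}$ and $H_{K,q}$ (where $K:=J\cup\{j\}$) via the formulae \eqref{eq:H=dq^} and \eqref{eq:H=0} of Section \ref{sec:quasi-poly}, and then to exploit the periodicity \eqref{eq:d=d} together with the three components of $q^{*}$.

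For part~(i), I split on whether $\rank A_{J}=\rank C_{J}$ or $\rank A_{J}=\rank C_{J}+1$. In the first case \eqref{eq:H=dq^} says $H_{J,q}=\emptyset\iff\tilde d_{J}(q)=0\iff d_{J,i}(q)\ne d'_{J,i}(q)$ for some $i$; since $e_{J,i},e'_{J,i}$ divide $\period_{0}$, both $d_{J,i}(q)$ and $d'_{J,i}(q)$ depend only on $\gcd\{\period_{0},q\}$, which by hypothesis coincides with $\gcd\{\period_{0},q'\}$. In the second case \eqref{eq:H=0} gives $H_{J,q}=\emptyset$ for $q>e'(J)$, and $q,q'>q^{*}\ge q_{0}\ge e'(J)$ forces both $H_{J,q}$ and $H_{J,q'}$ to be empty.

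For part~(ii), I use that $H_{j,q}\supseteq H_{J,q}$ is equivalent to $H_{K,q}=H_{J,q}$, hence (since $H_{K,q}\subseteq H_{J,q}$) to $|H_{K,q}|=|H_{J,q}|$; the case $j\in J$ is trivial, so assume $j\notin J$. When $\rank C_{K}=\rank C_{J}$ (Case~A), $\ell(K)=\ell(J)$ and the equality of cardinalities reduces via \eqref{eq:H=dq^} to $\tilde d_{K}(q)=d_{J}(q)$, a condition depending only on $\gcd\{\period_{0},q\}$ by the analysis of part~(i); hence Case~A is immediate. The delicate case is $\rank C_{K}=\rank C_{J}+1$ (Case~B); here I will show $|H_{K,q}|<|H_{J,q}|$ strictly, which forces $H_{j,q}\not\supseteq H_{J,q}$ for both $q$ and $q'$, giving the iff vacuously.

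The main obstacle is Case~B. Using Smith normal form, choose $P\in GL_{m}(\bbZ)$ with $PC_{J}=\diag(e_{J,1},\ldots,e_{J,\ell(J)},0,\ldots,0)$, set $c_{j}':=Pc_{j}$, and let $a:=\gcd\{c'_{j,i}:\ell(J)<i\le m\}$. Then $a\in\bbZ_{>0}$ because in Case~B, $c_{j}$ is not in the $\bbQ$-span of the columns of $C_{J}$. A direct computation using the Smith form shows that $a$ generates the ideal $\{y c_{j}:y\in\ker f_{J}\}\subseteq\bbZ$, and that the induced map $\ker f_{J,q}\to\bbZ_{q}$, $y\mapsto[yc_{j}]_{q}$, has image containing the subgroup generated by $[a]_{q}$; hence whenever $q>a$ one has $\ker f_{K,q}\subsetneq\ker f_{J,q}$ and $|H_{K,q}|<|H_{J,q}|$. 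To bound $a$, pick $y\in\bbZ^{m}$ with $yC_{J}=0$ and $yc_{j}=a$; then $yC_{K}=au$, where $u:=(0,\ldots,0,1)\in\bbZ^{|K|}$, so $u$ has finite order $a$ in the cokernel $\bbZ^{|K|}/\im f_{K}$. Since this cokernel has torsion exponent $e(K)=e_{K,\ell(K)}$, we conclude $a\mid e(K)$. For $J\ne\emptyset$ this yields $a\le e(K)\le q_{1}<q$; for $J=\emptyset$ we have directly $a=\gcd\{c_{1j},\ldots,c_{mj}\}\le q^{*}<q$. Thus each of the three components of $q^{*}$ (namely $q_{0}$ for part~(i), $q_{1}$ for Case~B with $J\ne\emptyset$, and $\max_{j}\gcd\{c_{1j},\ldots,c_{mj}\}$ for Case~B with $J=\emptyset$) plays exactly one role, completing the proof.
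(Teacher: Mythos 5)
Your proof is correct, and its part (i) coincides with the paper's argument; for part (ii), however, you take a genuinely different route. The paper first proves Lemma \ref{lm:period-poset}: given $H_{J,q}\ne\emptyset$, the inclusion $H_{j,q}\supseteq H_{J,q}$ holds iff (a) $([b_j]_q,[b_J]_q)$ lies in the row module of $([c_j]_q,[C_J]_q)$ (equivalently $H_{J\cup\{j\},q}\ne\emptyset$) and (b) $[c_j]_q$ lies in the column module of $[C_J]_q$; condition (a) is then handled by part (i), and condition (b) by comparing the elementary divisors of $[C_{J\cup\{j\}}]_q$ and $[C_J]_q$ (Lemma \ref{lm:subgroup} together with Proposition 3.2 of \cite{ktt}), the rank-jump case being killed by $q,q'>q_1\ge e(J\cup\{j\})$. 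You bypass this lemma entirely: with $K=J\cup\{j\}$ you use $H_{j,q}\supseteq H_{J,q}\iff |H_{K,q}|=|H_{J,q}|$ (finite sets, one contained in the other), settle the equal-rank case by \eqref{eq:H=dq^}, \eqref{eq:H=0} and the $\gcd\{\period_0,q\}$-dependence already established in Section \ref{sec:quasi-poly}, and in the rank-jump case exhibit an integral vector $y$ with $yC_J=0$ and $yc_j=a>0$, where $a$ generates the ideal $\{yc_j: yC_J=0\}$; since $[a]_q\ne 0$ for $q>a$, this yields $\ker f_{K,q}\subsetneq \ker f_{J,q}$ and hence the strict inequality $|H_{K,q}|<|H_{J,q}|$, while your observation that the class of the relevant unit vector has order exactly $a$ in $\bbZ^{|K|}/\im f_K$, whose torsion exponent is $e(K)$, gives $a\mid e(K)\le q_1$ for $J\ne\emptyset$ and $a=\gcd\{c_{1j},\ldots,c_{mj}\}\le q^*$ for $J=\emptyset$, so the inclusion fails for both $q$ and $q'$. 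What your route buys is independence from Lemma \ref{lm:period-poset} and from Proposition 3.2 of \cite{ktt}, replaced by a transparent counting/cokernel argument; what the paper's route buys is modularity, since both conditions (a) and (b) are treated by one and the same mod-$q$ elementary divisor machinery. Two small repairs: the Smith normal form needs column operations too, i.e.\ $PC_JQ=\diag(e_{J,1},\ldots,e_{J,\ell(J)},0,\ldots,0)$ with $Q\in GL_{|J|}(\bbZ)$ (harmless, since your computation only uses the left kernel of $C_J$ and the ideal $\{yc_j\}$); and in your Case A, when $\rank A_K=\rank C_K+1$ the quantity $\tilde d_K(q)$ is not defined in the paper's notation, but then $H_{K,q}=H_{K,q'}=\emptyset$ by \eqref{eq:H=0} because $q,q'>q_0$, so both inclusions fail and the equivalence is vacuous --- which is what your appeal to part (i) in fact delivers.
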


In order to verify this theorem, we first present the following lemma. 

\begin{lm}
\label{lm:period-poset} 
Fix an arbitrary $q\in \bbZ_{>0}$. 
Then, for any $j\in [n]$ and any $J\subseteq [n]$ such that 
$H_{J,q}\ne \emptyset$, 
we have $H_{j,q}\supseteq H_{J,q}$ if and only if 
the following two conditions hold true: 
\renewcommand{\labelenumi}{(\alph{enumi})}
\begin{enumerate}
\item $([b_j]_q, [b_J]_q) 
\in \bbZ_q^{|J|+1}$ 
lies in the submodule of the $\bbZ_q$-module $\bbZ_q^{|J|+1}$ 
generated by the rows of $([c_j]_q, [C_J]_q) 
\in {\rm Mat}_{m\times (|J|+1)}(\bbZ_q)$; 
%
\item $[c_j]_q\in \bbZ_q^m$ lies in 
the submodule of the $\bbZ_q$-module $\bbZ_q^m$ 
generated by the columns of 
$[C_J]_q\in {\rm Mat}_{m\times |J|}(\bbZ_q)$.  
\end{enumerate}
%
\end{lm}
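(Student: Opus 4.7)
The plan is to reformulate the inclusion $H_{j,q}\supseteq H_{J,q}$ in affine-linear terms and then match the two resulting conditions against (a) and (b). Since $H_{J,q}\neq\emptyset$, I fix a representative $z_0\in H_{J,q}$; then $H_{J,q}=z_0+K$, where $K:=\{w\in\bbZ_q^m : w[C_J]_q=0\}$ is the left annihilator of $[C_J]_q$ in $\bbZ_q^m$. Consequently $H_{j,q}\supseteq H_{J,q}$ is equivalent to the conjunction of (I) $z_0[c_j]_q=[b_j]_q$ and (II) $w[c_j]_q=0$ for every $w\in K$. I will show that (II) is equivalent to (b), and that, granting this, (I) is equivalent to (a).

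For the equivalence (II) $\Leftrightarrow$ (b), the direction (b) $\Rightarrow$ (II) is immediate: if $[c_j]_q=[C_J]_q v$ for some $v\in\bbZ_q^{|J|}$, then $w[c_j]_q=(w[C_J]_q)v=0$ for every $w\in K$. For the converse I apply Proposition \ref{prop:1} to obtain $P\in GL_m(\bbZ_q)$ and $Q\in GL_{|J|}(\bbZ_q)$ with $P[C_J]_q Q=\diag([d_1]_q,\ldots,[d_s]_q,0,\ldots,0)$, where $d_i\mid q$. Changing variables via $\tilde c_j:=P[c_j]_q$ and $\tilde w:=wP^{-1}$, the kernel $K$ is described by the conditions $\tilde w_i\in(q/d_i)\bbZ_q$ for $i\leq s$ (with $\tilde w_i$ arbitrary for $i>s$); testing (II) against $\tilde w=e_i$ for $i>s$ and $\tilde w=(q/d_i)e_i$ for $i\leq s$ forces the $i$-th coordinate of $\tilde c_j$ to vanish for $i>s$ and to be divisible by $d_i$ for $i\leq s$, which is precisely the statement that $\tilde c_j$ lies in the column span of the diagonal matrix, equivalently that $[c_j]_q$ lies in the column span of $[C_J]_q$.

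For (I) $\Leftrightarrow$ (a), note that (a) is equivalent to the existence of some $z'\in\bbZ_q^m$ satisfying both $z'[C_J]_q=[b_J]_q$ and $z'[c_j]_q=[b_j]_q$, since a $\bbZ_q$-linear combination of the rows of $([c_j]_q,[C_J]_q)$ is simply $z'([c_j]_q,[C_J]_q)$. The direction (I) $\Rightarrow$ (a) holds by taking $z'=z_0$. Conversely, given such a $z'$, we have $z'-z_0\in K$, so (II) (equivalent to (b) by the previous step) yields $(z'-z_0)[c_j]_q=0$, whence $z_0[c_j]_q=z'[c_j]_q=[b_j]_q$. Combining the two equivalences proves the lemma. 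The main obstacle is the forward direction (II) $\Rightarrow$ (b), a duality-type statement about finite $\bbZ_q$-modules for which the Smith-normal-form content of Proposition \ref{prop:1} is the appropriate tool.
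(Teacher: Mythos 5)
Your proof is correct and follows essentially the same route as the paper: both arguments translate by a common point $z_0$ to reduce the inclusion $H_{j,q}\supseteq H_{J,q}$ to the condition (a) that the affine parts are compatible plus the central-case inclusion $H_q(c_j,0)\supseteq H_q(C_J,0)$, which is then identified with the column-span condition (b). The only difference is that the paper cites Proposition 3.2 of \cite{ktt} for the equivalence of your condition (II) with (b), whereas you re-derive that duality statement directly from the Smith normal form of Proposition \ref{prop:1}; your derivation is sound, so this just makes the argument self-contained.
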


\Proof 
First suppose $H_{j,q}\supseteq H_{J,q}$. 
Then there exists $z_0\in H_{j,q} \cap H_{J,q}
=H_{J,q}\ne \emptyset$. 
Since $z_0([c_j]_q, [C_J]_q)
=([b_j]_q, [b_J]_q)$, we see that condition (a) holds.  
Further, we have 
$H_{j,q}=H_q(c_j,b_j)=z_0+H_q(c_j, 0)$ and 
$H_{J,q}=H_q(C_J, b_J)=z_0+H_q(C_J, 0)$. 
%
%
Hence $H_{j,q}\supseteq H_{J,q}$ is equivalent to  
$H_q(c_j, 0)\supseteq H_q(C_J, 0)$, which in turn 
is equivalent to 
$[c_j]_q$ being in 
the submodule of $\bbZ_q^m$ 
generated by the columns of $[C_J]_q$ 
(Proposition 3.2 of \cite{ktt}). 
Thus condition (b) holds. 

Next, suppose conditions (a) and (b) hold. 
Then by condition (a), we have 
$H_{j,q} \cap H_{J,q} \ne \emptyset$. 
Hence, by the same argument as above, 
$H_{j,q}\supseteq H_{J,q}$ is equivalent to 
condition (b). 
So $H_{j,q}\supseteq H_{J,q}$ holds true. 
\qed 

\

\noindent
{\sl Proof of Theorem \ref{th:periodicity-poset}.}\qquad 
{\sl Part (i):} 
Since $H_{\emptyset,q}=\bbZ_q^m$ and 
$H_{\emptyset,q'}=\bbZ_{q'}^m$, the equivalence 
$H_{J,q}=\emptyset \Leftrightarrow H_{J,q'}=\emptyset$ is 
trivially true for $J=\emptyset$. 
So let us consider nonempty $J\subseteq [n]$. 
We have 
$H_{J,q}=\{ z\in \bbZ_q^m: z[C_J]_q=[b_J]_q \} \ne \emptyset$ 
if and only if $[b_J]_q\in {\rm im}f_{J,q}$, where $f_{J,q}$ is 
defined in \eqref{eq:fJq}. 
First, consider the case of $J$ for which 
$\rank A_J=\rank C_J$. 
Then we know 
that $[b_J]_q\in {\rm im}f_{J,q}$ 
if and only if 
$d_{J,i}(q)=d'_{J,i}(q)$ for all $i=1,\ldots,\ell(J)$. 
This argument is valid when $q$ is replaced by $q'$. 
Now, since 
$d_{J,i}(q)=d_{J,i}(q')$ 
and $d'_{J,i}(q)=d'_{J,i}(q') \  
(1\le i\le \ell(J))$ 
because of 
the assumption $\gcd\{ \period_0,q\}=\gcd\{ \period_0,q'\}$, 
we find that $H_{J,q}=\emptyset$ if and only if 
$H_{J,q'}=\emptyset$.  
Next, consider the case of $J$ for which 
$\rank A_J=\rank C_J+1$.  
In that case, we have by \eqref{eq:H=0} that 
$H_{J,q}=H_{J,q'}=\emptyset$ because 
$q,q'>q^*\ge e'(J)
$, so 
the equivalence 
$H_{J,q}=\emptyset \Leftrightarrow H_{J,q'}=\emptyset$ is 
trivially true. 

{\sl Part (ii):} 
%
Thanks to $q,q'>q^*\ge \gcd\{ c_{1j},\ldots,c_{mj} \} \ge 1$, 
we have $H_{j,q}\ne \bbZ_q^m$ and $H_{j,q'}\ne \bbZ_{q'}^m$, 
so neither $H_{j,q}\supseteq H_{J,q}$ nor 
$H_{j,q'}\supseteq H_{J,q'}$ can happen for $J=\emptyset$; 
therefore, we may assume $J\ne \emptyset$. 
Moreover, when $j\in J$, the equivalence 
$H_{j,q}\supseteq H_{J,q} \Leftrightarrow
H_{j,q'}\supseteq H_{J,q'}$ 
is trivially true, so we may further assume $j\notin J$. 
Now, by Lemma \ref{lm:period-poset} it suffices to show that 
the two conditions (a), (b) in Lemma \ref{lm:period-poset} 
hold true if and only if the same two conditions hold true with 
$q$ replaced by $q'$. 
By part (i) of the present theorem with 
$J\cup \{ j\}$ regarded as the $J$ in (i), 
we know that (a) holds true if and only if (a) with $q$ replaced by 
$q'$ holds true.  
%
%
By essentially the same discussion as above, 
we find 
that (b) holds true if and only if (b) with $q$ replaced by 
$q'$ holds true, because $q,q'>q^*\ge e(J\cup\{ j\})$ 
for $J$ and $j$ such that 
$\rank C_{J\cup \{ j\}}=\rank C_{J}+1$. 
\qed 

\begin{rem} 
Consider the central case $b=0$. 
In that case, we have $q_0=0$ and $q^*=\max\{q_1, 
\max_{1\le j\le n}\gcd\{ c_{1j},\ldots,c_{mj} \}\}$. 
Note that $H_{J,q}\ne \emptyset$ for any $q\in \bbZ_{>0}$ 
and $J\subseteq [n]$ when $b=0$. 
\end{rem}

From Theorem \ref{th:periodicity-poset}, we obtain 
the following corollary.  

\begin{co}
Suppose $q, q'\in \bbZ_{>0}$ satisfy $q,q'>q^*$ 
and $\gcd\{\period_0,q\}=\gcd\{\period_0,q'\}$.  
Then $L_q$ is isomorphic to $L_{q'}$. 
In particular, the sequence of isomorphism classes of 
$L_q, \ q=1,2,\ldots$, is periodic in $q>q^*$ 
with a period 
$\period_0: \ L_q \simeq L_{q+\period_0}$ for $q>q^*$. 
\end{co}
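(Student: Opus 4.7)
The plan is to verify the two clauses of Definition~\ref{def:iso} by reducing both to the two parts of Theorem~\ref{th:periodicity-poset}, and then to deduce the ``in particular'' clause from an elementary gcd identity.

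First I would handle the membership condition ``$H_{J,q}\in L_q \iff H_{J,q'}\in L_{q'}$''. By the definition of $L_q$, membership is equivalent to $H_{J,q}\neq\emptyset$, so this is literally Theorem~\ref{th:periodicity-poset}(i) applied to each $J\subseteq [n]$ under the assumptions $q,q'>q^*$ and $\gcd\{\period_0,q\}=\gcd\{\period_0,q'\}$.

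Next I would handle the order condition. Recall that the partial order is reverse inclusion, so $H_{J_1,q}\le H_{J_2,q}$ means $H_{J_1,q}\supseteq H_{J_2,q}$. The key observation is the tautology
\[
H_{J_1,q} \;=\; \bigcap_{j\in J_1} H_{j,q},
\]
from which $H_{J_1,q}\supseteq H_{J_2,q}$ is equivalent to $H_{j,q}\supseteq H_{J_2,q}$ for every $j\in J_1$. Assuming $H_{J_2,q}\neq\emptyset$ and $H_{J_2,q'}\neq\emptyset$ (which is the standing hypothesis of Definition~\ref{def:iso}), Theorem~\ref{th:periodicity-poset}(ii) gives, for each individual $j\in J_1$, the equivalence $H_{j,q}\supseteq H_{J_2,q}\iff H_{j,q'}\supseteq H_{J_2,q'}$. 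Taking the conjunction over $j\in J_1$ then yields $H_{J_1,q}\supseteq H_{J_2,q}\iff H_{J_1,q'}\supseteq H_{J_2,q'}$, which is exactly the required order-preservation clause.

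For the ``in particular'' statement, I would simply note that for any $q>q^*$ we have $q+\period_0>q^*$ as well, and
\[
\gcd\{\period_0, q+\period_0\}=\gcd\{\period_0, q\},
\]
so the first half of the corollary applied with $q'=q+\period_0$ gives $L_q\simeq L_{q+\period_0}$. I do not anticipate a real obstacle here; the only mildly substantive step is the reduction of comparisons between general $H_{J_1,q}$ and $H_{J_2,q}$ to comparisons between singleton hyperplanes $H_{j,q}$ and $H_{J_2,q}$, which is needed to invoke Theorem~\ref{th:periodicity-poset}(ii) in the form it is stated.
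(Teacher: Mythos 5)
Your proposal is correct and is exactly the deduction the paper intends (the paper states the corollary as an immediate consequence of Theorem~\ref{th:periodicity-poset} without writing out the details). The one genuinely substantive step you supply --- rewriting $H_{J_1,q}\supseteq H_{J_2,q}$ as the conjunction of $H_{j,q}\supseteq H_{J_2,q}$ over $j\in J_1$ so that part~(ii) of the theorem applies, with the nonemptiness hypotheses on $H_{J_2,q}$ and $H_{J_2,q'}$ guaranteed by Definition~\ref{def:iso} --- is handled correctly, as is the gcd identity for the periodicity claim.
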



We must emphasize that the periodicity of 
the sequence of isomorphism classes of 
$L_q, \ q \in \bbZ_{>0}$, in the sense of 
Definition \ref{def:iso} does not imply  
the periodicity of $|M(\calA_q)|, \ q\in \bbZ_{>0}$, with 
the same minimum period.  
In our example \eqref{eq:A=3400}, we can see from 
\eqref{eq:V<H<H} that 
\begin{equation*}
|M(\calA_q)|=
\begin{cases}
q-1 
& \text{ if \ $\gcd\{ 12, q\}=1$,} \\ 
q-2 
& \text{ if \ $\gcd\{ 12, q\}=2$,} \\ 
q-3 
& \text{ if \ $\gcd\{ 12, q\}=3$,} \\ 
q-4 
& \text{ if \ $\gcd\{ 12, q\}=4$,} \\ 
q-4 
& \text{ if \ $\gcd\{ 12, q\}=6$,} \\ 
q-6 
& \text{ if \ $\gcd\{ 12, q\}=12$.} \\ 
\end{cases}
\end{equation*}
So the minimum period of the quasi-polynomial 
$|M(\calA_q)|, \ q \in \bbZ_{>0}$, is $12$.  
On the other hand, the minimum period 
of the sequence of isomorphism classes of 
$L_q, \ q \in \bbZ_{>0}$, is $6$ by \eqref{eq:V<H<H}. 
This is due to the following fact: 
$L_q$ with $\gcd\{ 12, q\}=2$ 
is isomorphic to  
$L_q$ with $\gcd\{ 12, q\}=4$, 
while $|H_{2,q}|=2$ for $\gcd\{ 12, q\}=2$ is not equal to 
$|H_{2,q}|=4$ for $\gcd\{ 12, q\}=4$; 
similarly, 
$L_q$ with $\gcd\{ 12, q\}=6$ 
is isomorphic to  
$L_q$ with $\gcd\{ 12, q\}=12$, 
while $|H_{2,q}|=2$ for $\gcd\{ 12, q\}=6$ is not equal to 
$|H_{2,q}|=4$ for $\gcd\{ 12, q\}=12$.   

Concerning the coarseness of the 
intersection posets, we can obtain the following result: 
\begin{co}
Suppose $J_1\subseteq [n]$ and $J_2\subseteq [n]$ satisfy 
$H_{J_1,q}=H_{J_2,q}$ for some $q>q^*$. 
Then $H_{J_1,q'}=H_{J_2,q'}$ for any $q'>q^*$ such that 
$\gcd\{ \period_0, q' \}|\gcd\{ \period_0, q\}$. 
\end{co}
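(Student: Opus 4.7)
The plan is to reduce the set equality $H_{J_1,q}=H_{J_2,q}$ to a collection of single-hyperplane containments and then transport each such containment from $q$ to $q'$ via a sharpening of Theorem \ref{th:periodicity-poset} that allows one-sided divisibility of $\gcd\{\period_0,q\}$ in place of equality. Concretely, assuming $H_{J_1,q}=H_{J_2,q}$ is a nonempty element of $L_q$ (the case of interest in the intersection-poset context), this equality is equivalent to the twin assertions
\begin{equation*}
H_{j,q}\supseteq H_{J_1,q}\ \text{ for every }\, j\in J_2\setminus J_1,\qquad H_{j,q}\supseteq H_{J_2,q}\ \text{ for every }\, j\in J_1\setminus J_2.
\end{equation*}

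The sharpened theorem that I would prove reads: under the hypotheses $q,q'>q^*$ and $\gcd\{\period_0,q'\}\mid\gcd\{\period_0,q\}$, if $H_{J,q}\ne\emptyset$ then $H_{J,q'}\ne\emptyset$, and if $H_{j,q}\supseteq H_{J,q}\ne\emptyset$ then $H_{j,q'}\supseteq H_{J,q'}$. Its proof parallels that of the original theorem: by the analysis of Section \ref{sec:quasi-poly} and Lemma \ref{lm:period-poset}, both nonemptiness and containment are characterized by the matching, modulo $q$, of the elementary divisors of certain integer sub-matrices built from $C$ and $b$, together with the $\bbZ$-rank equalities $\rank A_J=\rank C_J$ and $\rank C_{J\cup\{j\}}=\rank C_J$. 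These rank equalities depend only on the integer matrices, not on $q$; by $q^*\ge q_0,q_1$ they are forced by the hypotheses at $q$, and they then hold equally well for the $q'$-side argument. Every integer elementary divisor entering the comparisons divides $\period_0$ (by Lemma~2.3 of \cite{ktt}), so the matching conditions depend only on $\gcd\{\period_0,\cdot\}$, and the passage from $q$ to $q'$ reduces to the identity
\begin{equation*}
\gcd\{e,d'\}\,=\,\gcd\{\gcd\{e,d\},\,d'\}\quad\text{whenever }\, d'\mid d,
\end{equation*}
which yields $\gcd\{e,d\}=\gcd\{e',d\}\Rightarrow\gcd\{e,d'\}=\gcd\{e',d'\}$ for any $e,e'\mid\period_0$, where $d=\gcd\{\period_0,q\}$ and $d'=\gcd\{\period_0,q'\}$.

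With the sharpened theorem in hand, the corollary is routine assembly: each of the finitely many containments listed in the first paragraph transfers to $q'$, yielding $H_{J_1,q'}\subseteq H_{J_2,q'}$ and the reverse inclusion, and hence $H_{J_1,q'}=H_{J_2,q'}$. The main obstacle I anticipate is precisely the elementary-divisor transfer: the original proof of Theorem \ref{th:periodicity-poset} used equal gcds and was therefore symmetric in $q$ and $q'$, whereas here the hypothesis $\gcd\{\period_0,q'\}\mid\gcd\{\period_0,q\}$ is asymmetric, and one must check that the implications really only require the weaker one-sided hypothesis---which is exactly what the displayed identity provides.
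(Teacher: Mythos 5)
Your proof is correct and follows essentially the same route as the paper's: the paper's (very terse) proof likewise just observes that $\gcd\{\period_0,q'\}\mid\gcd\{\period_0,q\}$ forces $d_{J,i}(q')\mid d_{J,i}(q)$ and then reruns the argument of Theorem \ref{th:periodicity-poset}, which is exactly your one-sided sharpening via the identity $\gcd\{e,d'\}=\gcd\{\gcd\{e,d\},d'\}$ for $d'\mid d$, applied after decomposing the set equality into the containments $H_{j,q}\supseteq H_{J_1,q}$ ($j\in J_2\setminus J_1$) and $H_{j,q}\supseteq H_{J_2,q}$ ($j\in J_1\setminus J_2$). Your explicit restriction to $H_{J_1,q}=H_{J_2,q}\ne\emptyset$ is consistent with the intended scope (the corollary concerns coarseness of the intersection poset, and nonemptiness --- unlike emptiness --- is what transfers in the divisibility direction), so nothing essential is lost.
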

\Proof 
Just notice $d_{J,i}(q')|d_{J,i}(q), \ 1\le i\le \ell(J)$, 
for $J=J_1,J_2$ because of 
$\gcd\{ \period_0, q' \}|\gcd\{ \period_0, q\}$. 
Then the corollary follows from essentially the same argument 
as in the proof of Theorem \ref{th:periodicity-poset}. 
\qed 

\section{Example}
\label{sec:Bm}

In this section, we study the non-central 
arrangement $\hat{\calB}_m^{[0,a]} \ (a\in \bbZ_{>0})$ 
of Athanasiadis \cite{ath99} 
to illustrate our results. 

The arrangement $\hat{\calB}_m^{[0,a]} \ (a\in \bbZ_{>0})$ 
in $\bbR^m$ 
is a deformation of the Coxeter arrangement of type $B_m$, 
consisting of the hyperplanes defined by the following 
equations: 
\begin{eqnarray*}
x_i&=&0,1,\ldots,a, \quad 1\le i\le m, \\ 
x_i-x_j&=&0,1,\ldots,a, \quad 1\le i<j\le m, \\ 
x_i+x_j&=&0,1,\ldots,a, \quad 1\le i<j\le m.
\end{eqnarray*} 
We take the coefficient matrix $C$ and the vector $b$ so that 
$C$ consists of $1, -1$ or $0$.  

When $q$ is odd, Athanasiadis \cite{ath99} 
states in the proof of his Proposition 4.3 that 
the characteristic polynomial (
e.g., 
\cite{ort}) 
$\chi(\hat{\calB}_m^{[0,a]}, t)$ 
of the real arrangement $\hat{\calB}_m^{[0,a]}$ 
satisfies 
\begin{equation}
\label{eq:ch(B)Athanasiadis} 
\chi(\hat{\calB}_m^{[0,a]}, q)
= 
[y^{\frac{q-1}{2}-m}]
\left( \{ \phi_a(y)\}^{m+1} 
\sum_{j=0}^{\infty} 
(2j+1)^{m} y^{aj}
-
f_{a-2}(y) \{ \phi_a(y)\}^{m-1} 
\sum_{j=0}^{\infty}
a_j'y^{aj}\right) 
\end{equation}
for all sufficiently large odd $q\in \bbZ_{>0}$, 
where $a'_0:=1$,  
\begin{equation}
\label{eq:a'j} 
a'_j:=\sum_{k=2}^m\binom{m}{k}(2^k-2)(2j-1)^{m-k}
=(2j+1)^m-2(2j)^m+(2j-1)^m, \quad j=1,2,\ldots,
\end{equation} 
\[
\phi_b(y):=1+y+y^2+\cdots +y^{b-1},  
\quad b\in \bbZ_{\ge 0}
\] 
(we understand that $\phi_0(y)=0$) 
and 
\[
f_{a-2}(y):=\sum_{s\ge 0, \ t \ge 0, \ s+2t \le a-2} y^{s+t}
=
\begin{cases}
\left\{ \phi_{\frac{a}{2}}(y) \right\}^2 & \text{if $a
$ is even; } \\  
\phi_{\frac{a-1}{2}}(y) \phi_{\frac{a+1}{2}}(y) & \text{if $a
$ is odd. }  
\end{cases}
\] 
(For a formal power series or a polynomial $F(y)$, 
we denote by 
$[y^k]F(y), \ k\in \bbZ_{\ge 0}$, the coefficient of $y^k$ in $F(y)$.) 
He obtained \eqref{eq:ch(B)Athanasiadis} 
by representing an $m$-tuple 
$z=(z_1, z_2, \ldots, z_m)\in \bbZ_q^m$ satisfying 
$z_i\ne0 \ (1\le i\le m)$ and 
$z_i \pm z_j \ne 0 \ (1\le i<j\le m)$ 
as a placement of $m$ integers 
$\epsilon_1 1, \epsilon_2 2,\ldots, \epsilon_m m \ 
(\epsilon_i=1 \text{ or} -1 \text{ for each } i=1,2,\ldots,m)$ 
and $(q-1)/2-m$ indistinguishable balls along a line, 
with an extra zero in the leftmost position. 

By inspecting his arguments, we can see that 
the right-hand side of \eqref{eq:ch(B)Athanasiadis} is 
equal to $|M((\hat{\calB}_m^{[0,a]})_q)|$ for all 
odd $q\ge 2a+1$. 
Thus we have 
\begin{equation}
|M((\hat{\calB}_m^{[0,a]})_q)|
= 
[y^{\frac{q-1}{2}-m}]
\left( \{ \phi_a(y)\}^{m+1} 
\sum_{j=0}^{\infty} 
(2j+1)^{m} y^{aj}
-
f_{a-2}(y) \{ \phi_a(y)\}^{m-1} 
\sum_{j=0}^{\infty}
a_j'y^{aj}\right) 
\label{eq:M(B)q:odd}
\end{equation}
for all odd 
$q \ge 2a+1$. 

Now, let us move on to the case of even $q$. 
By modifying the arguments of Athanasiadis \cite{ath99} 
for the case of odd $q$,  
we can count $|M((\hat{\calB}_m^{[0,a]})_q)|$ for even $q$ 
in the following way. 
Similarly to the case of odd $q$,  we consider a placement of 
$m$ integers $\epsilon_11,\ldots,\epsilon_mm$ and $q/2-m$ 
indistinguishable balls 
with an extra zero in the left most position. 
When the rightmost position 
is occupied by an integer, its sign 
is always taken to be positive. 
Then we can obtain   
\begin{eqnarray}
\label{eq:M(B)q:even} 
|M((\hat{\calB}_m^{[0,a]})_q)|
&=& [y^{\frac{q}{2}-m}]\left( \{ \phi_a(y)\}^m 
\sum_{j=1}^{\infty}
\left\{ (2j)^m-(2j-1)^m \right\} 
y^{aj}\right) 
\\ 
&& \quad + [y^{\frac{q}{2}-m-1}]\left( \{ \phi_a(y)\}^{m+1} 
\sum_{j=0}^{\infty} (2j+1)^{m} y^{aj}\right) 
\nonumber \\ 
&& \qquad -[y^{\frac{q}{2}-m-1}]\left( f_{a-3}(y) \{ \phi_a(y)\}^{m-1} 
\sum_{j=0}^{\infty}a_j'y^{aj}\right) 
\nonumber 
\end{eqnarray}
for all even 
$q \ge 2a+2$, 
where 
$a'_j \ (j=0,1,2,\ldots)$ are defined in \eqref{eq:a'j}  
and 
\[
f_{a-3}(y)=\sum_{s\ge 0, \ t \ge 0, \ s+2t \le a-3} y^{s+t}
=
\begin{cases} 
\left\{ \phi_{\frac{a-1}{2}}(y) \right\}^2 & \text{if $a
$ is odd; } \\  
\phi_{\frac{a}{2}-1}(y) \phi_{\frac{a}{2}}(y) & \text{if $a
$ is even. }  
\end{cases}
\]
The first term on 
the right-hand side of \eqref{eq:M(B)q:even} 
corresponds to the first term 
on the right-hand side of \eqref{eq:M(B)q:odd} 
with the restriction that 
the rightmost position 
on the line 
is occupied by an 
integer 
(which is necessarily positive). 
%
The second term 
on the right-hand side of \eqref{eq:M(B)q:even} 
corresponds to the first term 
on the right-hand side of \eqref{eq:M(B)q:odd} 
with the restriction that 
the rightmost position 
is occupied by a ball.  
The third term 
on the right-hand side of \eqref{eq:M(B)q:even} 
corresponds 
to the second term 
on the right-hand side of \eqref{eq:M(B)q:odd}. 

As in Athanasiadis \cite{ath99}, let 
$S$ be the shift operator: 
\[
Sf(y):=f(y-1)
\] 
for polynomials $f(y)$. 

\begin{lm} 
\label{lm:[y^8q-l)]}
Let $p, a, m\in \bbZ_{>0}, \ l \in \bbZ_{\ge 0}$ and 
$h\in \bbZ_{\ge 0}$ with $h\le m-1$. 
Suppose $b\in \bbZ$ and 
$c\in 
\bbZ\setminus \{ 0\}$ 
satisfy $c | ab$. 
Furthermore, assume $p\ge l+m(a-1)$.  
Then we have 
\begin{equation}
\label{eq:cj+b}
[y^{p-l}]\psi(y)\{\phi_a(y)\}^m\sum_{j=0}^{\infty}
(cj+b)^h y^{aj}
=
\frac{1}{a^{h+1}}\psi(S)\{ \phi_a(S)\}^m
S^{l-\frac{ab}{c}}(cp)^h 
\end{equation} 
for any polynomial $\psi(y)$. 
\end{lm}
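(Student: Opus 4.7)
The plan is to reduce the identity to a roots-of-unity filter and then exploit the fact that $P(y) := \psi(y)\{\phi_a(y)\}^m$ vanishes to order at least $m$ at each nontrivial $a$-th root of unity. Set $r := ab/c \in \bbZ$ (an integer by the hypothesis $c\mid ab$) and write $P(y) = \sum_k \gamma_k y^k$. First I would unpack both sides as explicit sums in $k$. On the left, expanding the coefficient extraction and substituting $k = p-l-aj$ yields $cj+b = (c/a)(p-l-k+r)$, so (using the hypothesis on $p$ to guarantee that every $k$ with $\gamma_k\ne 0$ is picked up by some $j\ge 0$)
$$\text{LHS} \;=\; \left(\frac{c}{a}\right)^h \sum_{\substack{k\ge 0\\ k\equiv p-l \,(\bmod\,a)}}(p-l-k+r)^h\,\gamma_k.$$
On the right, since $\psi(S)\{\phi_a(S)\}^m = P(S)$ and $S^{l-r}(cp)^h = (c(p-l+r))^h$, expanding $P(S)$ gives
$$\text{RHS} \;=\; \frac{c^h}{a^{h+1}}\sum_{k\ge 0}(p-l-k+r)^h\,\gamma_k.$$
Thus the desired equation is equivalent to
$$\sum_{k\equiv p-l\,(\bmod\,a)} f(k)\,\gamma_k \;=\; \frac{1}{a}\sum_k f(k)\,\gamma_k, \qquad f(k):=(p-l-k+r)^h.$$

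The next step is to invoke the standard roots-of-unity filter
$$\sum_{k\equiv p-l\,(\bmod\,a)} f(k)\,\gamma_k \;=\; \frac{1}{a}\sum_{\zeta^a=1}\zeta^{-(p-l)}\sum_k f(k)\,\gamma_k\,\zeta^k.$$
The $\zeta=1$ term contributes precisely $\frac{1}{a}\sum_k f(k)\gamma_k$, so it suffices to show that $\sum_k f(k)\gamma_k\zeta^k = 0$ for each $\zeta\ne 1$ with $\zeta^a=1$. Expanding $f(k)=\sum_{i=0}^h a_ik^i$ and using the operator identity $\sum_k k^i\gamma_k y^k = (y\partial_y)^iP(y)$, this reduces to showing
$$\sum_{i=0}^h a_i(y\partial_y)^iP(y)\big|_{y=\zeta} = 0.$$

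Each $(y\partial_y)^i$ can be expressed (via Stirling numbers of the second kind) as a linear combination of $y^j\partial_y^j$ for $0\le j\le i$, so the vanishing above follows once $P^{(j)}(\zeta)=0$ for $0\le j\le h$. But $\phi_a(\zeta)=0$ for every nontrivial $a$-th root of unity (since $\phi_a(y)=(y^a-1)/(y-1)$), so $P = \psi\cdot\phi_a^m$ has a zero of order at least $m$ at $\zeta$, yielding $P^{(j)}(\zeta)=0$ for $j\le m-1$. The hypothesis $h\le m-1$ is exactly what makes the argument work. The key insight — and really the only delicate point — is that the bound $h\le m-1$ matches precisely the order of vanishing of $\phi_a^m$ at nontrivial $a$-th roots of unity, which is what forces the filter to distribute the total sum evenly across all $a$ residue classes.
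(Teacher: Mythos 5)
Your proof is correct, and it takes a genuinely different route at the one nontrivial step. Both arguments begin the same way: extract the coefficient, substitute $k=p-l-aj$, and reduce the identity to the statement that summing $f(k)\gamma_k$ (with $f(k)=(p-l-k+r)^h$, $r=ab/c$, and $\gamma_k$ the coefficients of $\psi\,\phi_a^m$) over the single residue class $k\equiv p-l \pmod a$ gives exactly $\tfrac1a$ of the full sum. At that point the paper reduces to $\psi=1$ and invokes Lemma~2.2 of Athanasiadis as a black box for this equidistribution, whereas you prove it from scratch: a roots-of-unity filter, the identity $\sum_k k^i\gamma_k y^k=(y\partial_y)^iP(y)$, the Stirling expansion of $(y\partial_y)^i$ in terms of $y^j\partial_y^j$, and the order-$m$ vanishing of $\phi_a^m$ at every nontrivial $a$-th root of unity. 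This buys a self-contained argument that makes transparent why the hypothesis $h\le m-1$ is exactly the right threshold, at the cost of being longer than the paper's one-line citation; you also correctly avoid the paper's reduction to $\psi=1$ by carrying $\psi$ inside $P$ throughout. One shared caveat: your parenthetical claim that the hypothesis $p\ge l+m(a-1)$ guarantees every $k$ with $\gamma_k\ne0$ satisfies $k\le p-l$ is literally true only when $\deg\psi=0$; for general $\psi$ one needs $p\ge l+\deg\psi+m(a-1)$. The paper's statement has the same imprecision (its reduction to $\psi=1$ shifts $l$ to $l+i$ without re-checking the inequality), and it is harmless in the applications where $p$ is taken large, so this is not a defect of your argument relative to the paper's.
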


\Proof 
It suffices to prove the lemma when 
$\psi(y)=1$. 
Define $c_{k;m,a}$ by 
$\{\phi_a(y)\}^m=\sum_{k=0}^{m(a-1)}
c_{k;m,a}y^k$. 
Then the left-hand side of \eqref{eq:cj+b} is  
\begin{equation*}
[y^{p-l}]
\left( 
\sum_{k=0}^{m(a-1)}\sum_{j=0}^{\infty}
c_{k;m,a}(cj+b)^h y^{aj+k}
\right) 
=
\frac{1}{a^h}
\sum_{k=p-l-aj, \ 0\le k\le m(a-1), \ 
j\in \bbZ_{\ge 0}}
c_{k;m,a}\{ c(p-l-k)+ab \}^h, 
\end{equation*}  
which can be written as  
\begin{equation}
\label{eq:1/a^h}
\frac{1}{a^h}
\sum_{k\equiv p-l \ (\mod a), \ 0\le k\le m(a-1)}
c_{k;m,a}\left\{ c\left(p-k-l+\frac{ab}{c}\right) \right\}^h
\end{equation}
because $p-l\ge m(a-1)$. 
Since $h\le m-1$, we have by Lemma 2.2 of 
Athanasiadis \cite{ath99} that 
\eqref{eq:1/a^h} is equal to 
\[
\frac{1}{a^h}\times \frac{1}{a}
\{ \phi_a(S)\}^m 
\left\{ c\left(p-l+\frac{ab}{c}\right) \right\}^h
=
\frac{1}{a^{h+1}}\{ \phi_a(S)\}^mS^{l-\frac{ab}{c}}
(cp)^h. 
\]
\qed 

If we use Lemma \ref{lm:[y^8q-l)]}, it is not hard 
to see that the constituent $P_2(q)$ 
of the characteristic quasi-polynomial 
$|M((\hat{\calB}_m^{[0,a]})_q)|$ for even $q$ is 
\begin{equation}
\label{eq:P2}
P_2(q)
=
\frac{\{\phi_a(S^2)\}^{m-1}S^{2m-a}(1-S^a)^2}{a^{m+1}}
\left( S^a\cdot \frac{1+S^a}{1-S^2}
+S^2 \cdot \frac{(1+S^a)^2}{(1-S^2)^2}
-f_{a-3}(S^2)S^2 \right)q^m.  
\end{equation}
Moreover, $|M((\hat{\calB}_m^{[0,a]})_q)|=P_2(q)$ for all 
even $q\ge 2a(m+1)$. 

\begin{rem}
When $a$ is odd, we 
cannot 
directly apply Lemma \ref{lm:[y^8q-l)]} to 
the terms on the right-hand side of \eqref{eq:M(B)q:even} 
because the condition $c|ab$ of the lemma is not met. 
But the result \eqref{eq:P2} itself obtained by formally applying the lemma is correct. 
Similarly, the exponent of each term of $(2j)^m-(2j-1)^m$ is $m$ and this fact violates the 
condition $h\le m-1$ of the lemma. 
But the degree of the polynomial $(2j)^m-(2j-1)^m$ in $j$ is $m-1$, and we can formally 
apply the lemma to each of $(2j)^m$ and $(2j-1)^m$ and then take the difference. 
The same remark applies to $a'_j=(2j+1)^m-2(2j)^m+(2j-1)^m, \ j\ge 1,$ as well. 
\end{rem}

When $a$ is odd, we can calculate  
\[
f_{a-3}(S^2)=\left( \frac{1-S^{a-1}}{1-S^2}\right)^2
\] 
and 
\[
P_2(q)=
\frac{\{\phi_a(S^2)\}^{m-1}S^{2m-a}(1-S^a)^2}{a^{m+1}}
\cdot 
\frac{S^a(1+S)^2}{(1-S^2)^2}q^m
=
\frac{1}{a^{m+1}}S^{2m}\{ \phi_a(S^2)\}^{m-1}
\{ \phi_a(S)\}^2 q^m. 
\] 
By Propositions 4.2 and 4.3 of Athanasiadis \cite{ath99}, 
this is equal to $P_1(q)$ for odd $a$, 
so the period collapse 
occurs for odd $a$. 

When $a$ is even, on the other hand, 
we can obtain 
\[
f_{a-3}(S^2)=
\frac{(1-S^{a-2})(1-S^a)}{(1-S^2)^2}
\] 
and 
\[
P_2(q)=
\frac{2}{a^{m+1}}S^{2m}\{ \phi_a(S^2)\}^{m-1}
\{ \phi_{\frac{a}{2}}(S^2) \}^2
(1+S^2)
q^m. 
\] 
By Propositions 4.2 and 4.3 of Athanasiadis \cite{ath99}, 
we know \[
P_1(q)=\frac{4}{a^{m+1}}S^{2m+1}
\{ \phi_{a}(S^2) \}^{m-1}
\{ \phi_{\frac{a}{2}}(S^2) \}^2
q^m
\] 
for even $a$. 
Hence, unlike the case of odd $a$, 
the period collapse 
does not occur for even $a$. 

Thus we have obtained the following theorem. 

\begin{theorem}
For 
odd $a$, 
we have 
\begin{equation}
\label{eq:odd-a-M}
|M((\hat{\calB}_m^{[0,a]})_q)|=
\frac{1}{a^{m+1}}S^{2m}\{ \phi_a(S^2)\}^{m-1}
\{ \phi_a(S)\}^2 q^m
\end{equation} 
for all $q\ge 2a(m+1)-1$. 
For 
even $a$, 
we have 
\begin{equation}
\label{eq:even-a-M}
|M((\hat{\calB}_m^{[0,a]})_q)|
=
\begin{cases}
\frac{4}{a^{m+1}}S^{2m+1}
\{ \phi_{a}(S^2) \}^{m-1}
\{ \phi_{\frac{a}{2}}(S^2) \}^2
q^m 
& \text{if $q$ is odd; } \\ 
\frac{2}{a^{m+1}}S^{2m}\{ \phi_a(S^2)\}^{m-1}
\{ \phi_{\frac{a}{2}}(S^2) \}^2
(1+S^2)
q^m 
& \text{if $q$ is even}  
\end{cases}
\end{equation}
for all $q\ge 2a(m+1)-1$. 
\end{theorem}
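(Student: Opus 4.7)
The plan is to assemble the theorem directly from the machinery already developed in Section 5. We have the generating-function expressions \eqref{eq:M(B)q:odd} for odd $q\ge 2a+1$ and \eqref{eq:M(B)q:even} for even $q\ge 2a+2$, and Lemma \ref{lm:[y^8q-l)]} translates each coefficient extraction into a polynomial in $q$ (via the shift operator $S$). For odd $q$, applying the lemma to \eqref{eq:M(B)q:odd} reproduces the constituent $P_1(q)$ exhibited in Propositions~4.2 and~4.3 of \cite{ath99}. For even $q$, applying the lemma to each of the three summands on the right of \eqref{eq:M(B)q:even} yields the closed form \eqref{eq:P2} for the even-$q$ constituent $P_2(q)$.

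First, I would verify the validity range. Lemma \ref{lm:[y^8q-l)]} requires $p\ge l+m(a-1)$. In \eqref{eq:M(B)q:odd} we have $p=(q-1)/2$, and in \eqref{eq:M(B)q:even} we have $p=q/2$, with the largest $l$ encountered being on the order of $m+1$ (from the $\psi=\phi_a(y)\cdot\phi_a(y)^m$ factors). A uniform sufficient condition is $q\ge 2a(m+1)-1$, which I would check case by case once the lemma is applied. This gives the stated range in both parts of the theorem.

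Next, for the even-$q$ constituent $P_2(q)$ given by \eqref{eq:P2}, I would split according to the parity of $a$ using the two cases of $f_{a-3}(y)$:
\[
f_{a-3}(S^2)=\begin{cases}\bigl(\frac{1-S^{a-1}}{1-S^2}\bigr)^2 & a \text{ odd},\\[2pt] \frac{(1-S^{a-2})(1-S^a)}{(1-S^2)^2} & a \text{ even}.\end{cases}
\]
Substituting each into \eqref{eq:P2} and combining the three terms over the common denominator $(1-S^2)^2$, the numerator telescopes: for odd $a$ the bracket collapses to $S^a(1+S)^2$, so $P_2(q)$ simplifies to $\frac{1}{a^{m+1}}S^{2m}\{\phi_a(S^2)\}^{m-1}\{\phi_a(S)\}^2 q^m$, which is exactly the form of $P_1(q)$ supplied by \cite{ath99}; this is the period collapse. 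For even $a$ the bracket collapses to $2S^2(1+S^a)(1+S^2)/(1-S^2)$ (times $(1-S^a)^{-2}$ relative to the prefactor), producing the stated even-$a$ expression for $P_2(q)$, which differs from the $P_1(q)$ of \cite{ath99} by the factor structure $2S^{-1}$.

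The main obstacle is the algebraic verification in the previous paragraph: checking that the three-term bracket in \eqref{eq:P2} simplifies correctly in each parity case, and in particular that for odd $a$ the simplification produces \emph{exactly} $P_1(q)$ (forcing period collapse). One has to be careful about two minor technical points that the preceding Remark addresses: (i) for odd $a$, Lemma \ref{lm:[y^8q-l)]} is applied formally since the divisibility hypothesis $c\mid ab$ fails for the third summand, but the resulting shift-operator identity is still valid; and (ii) the summand $(2j)^m-(2j-1)^m$ has degree $m-1$ in $j$ (not $m$), so the lemma applies to each piece after splitting. Once these are handled, the two formulas \eqref{eq:odd-a-M} and \eqref{eq:even-a-M} follow by combining the odd-$q$ and even-$q$ constituents, noting that in the odd-$a$ case they coincide and in the even-$a$ case they remain genuinely different.
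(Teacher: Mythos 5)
Your proposal follows essentially the same route as the paper: apply Lemma \ref{lm:[y^8q-l)]} to \eqref{eq:M(B)q:odd} and \eqref{eq:M(B)q:even} to get the constituents, then split on the parity of $a$ via the two forms of $f_{a-3}$ and check that the bracket in \eqref{eq:P2} collapses to $S^a(1+S)^2/(1-S^2)^2$ for odd $a$ (giving period collapse against $P_1$) and to $2S^a(1+S^2)/(1-S^2)^2$ for even $a$. The only quibble is that your intermediate description of the even-$a$ collapse (the expression $2S^2(1+S^a)(1+S^2)/(1-S^2)$) is miswritten, but the final formula you arrive at is the correct one, so this is a transcription slip rather than a gap.
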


Using PARI/GP \cite{pg}, we have calculated $q_0$ for some 
values of $m$ and $a$. 
Denote by $\bar{q}=\bar{q}(m, a)$ the greatest integer $q\in \bbZ_{>0}$ for which 
both sides do not agree in \eqref{eq:odd-a-M} or in \eqref{eq:even-a-M}. 
We list $q_0, 2a(m+1)-2$ and $\bar{q}$ 
for some combinations of values of $m$ and $a$:

\

$m=4, \ a=3$: 
\begin{center}
\begin{tabular}{c|c|ccccccccc}
$q_0$ & $2a(m+1)-2$ & $\bar{q}$ \\ \hline
30 & 28 & 21 
\end{tabular}
\end{center}

\

$m=4, \ a=4$: 
\begin{center}
\begin{tabular}{c|c|ccccccccc}
$q_0$ & $2a(m+1)-2$ & $\bar{q}$ \\ \hline
38 & 38 & 28 
\end{tabular}
\end{center}

\

$m=5, \ a=3$: 
\begin{center}
\begin{tabular}{c|c|ccccccccc}
$q_0$ & $2a(m+1)-2$ & $\bar{q}$ \\ \hline
42 & 34 & 27 
\end{tabular}
\end{center}

\

$m=5, \ a=4$: 
\begin{center}
\begin{tabular}{c|c|ccccccccc}
$q_0$ & $2a(m+1)-2$ & $\bar{q}$ \\ \hline
54 & 46 & 36 
\end{tabular}
\end{center}









\end{document}